\documentclass[10pt]{article}

% Ars Combinatoria style:10 point Times or Computer Modern font, 4.5 x 7.125 inches (11.4
% x 18.1 cm) in text size.
%
\textwidth = 11.4 cm \textheight = 18.1 cm
\usepackage{times}
\usepackage{array, amssymb, amsmath, graphics}
%\addtolength{\textwidth}{2cm}
%\addtolength{\evensidemargin}{-1cm}
%\addtolength{\oddsidemargin}{-1cm}
%\addtolength{\textheight}{2cm}
%\addtolength{\topmargin}{-0.5cm}
\setlength{\parindent}{0cm} \setlength{\parskip}{1.4mm}
\newtheorem{remark}{Remark}
\newtheorem{theorem}{Theorem}
\newtheorem{example}{Example}
\newtheorem{conj}{Conjecture}
\newtheorem{ques}[conj]{Question}
\newtheorem{lemma}{Lemma}
\newtheorem{proposition}{Proposition}

\newenvironment{proof}{{\bf Proof.}}{\hspace*{1mm}\hfill\rule{2mm}{2mm}}

\newtheorem{pretheorema}{{\bf Theorem}}

\def\vol{\rm vol}

\def\M#1#2#3{\mbox{\rm M}(#1\mbox{-}(#2,#3))}
\def\T#1#2#3{#1\mbox{-}(#2,#3)\mbox{ \rm Latin trade}}
\def\SFT#1#2#3{#1\mbox{-}(#2,#3)\mbox{ \sf Latin trade}}
\def\ITT#1#2#3{#1\mbox{-}(#2,#3)\mbox{ \it Latin trade}}

\def\Ts#1#2#3{#1\mbox{-}(#2,#3)\mbox{ \rm Latin trades}}
\def\ITTs#1#2#3{#1\mbox{-}(#2,#3)\mbox{ \it Latin trades}}
%%%%%%%%%%%%%%%%%%%%%%%%%%%%%%%%%%%%%%%%%%%%%%%%%%%%%%%%%%%%%%%%%%%%%%
%
%  \newtheorem{preexample}{EXAMPLE}
%  \renewcommand{\thepreexample}{{\thesection.\arabic{preexample}}}
%  \newenvironment{example}[1]{\begin{preexample}{\rm
%               #1}{}}{\end{preexample}}
%
\def\m#1#2{\raise 0.2ex\hbox{%        % You can change 0.2
    ${#1_{\displaystyle #2}}$}}
\def\xx#1{\raise 0.5ex\hbox{%         % You can change 0.5
    ${#1}$}}
%\def\n#1{\vbox to 3mm{\vspace{0mm}\vfill \hbox to 4.5mm{\hfill
%    $#1$\hfill} \vfill }}

                % You can change 1.0

%%%%%%%%%%%%%%%%%%%%%%%%%%%%%%%%%%%%%%%%%%%%%%%%%%%%%%%%%%%%%%%%%%%%%%
%\def\em{\sf}

%%%%%%%%%%%%%%%%%%%%%%%%%%%%%%%%%%%%%%%%%%%%%%%%%%%%%%%%%%%%%%%%%%%%%%
\title{Possible volumes of $\Ts{t}{v}{t+1}$}
\author{E. S. Mahmoodian\footnote{Department of Mathematical Sciences, Sharif University
of Technology, and School of Mathematics, Institute for Studies in
Theoretical Physics and Mathematics (IPM),  P. O. Box: 19395-5746
Tehran, Iran ({\tt emahmood@sharif.edu}).} \ and M. S.
Najafian\footnote{
%Department of Mathematical sciences, Sharif
%University of Technology,  and School of Mathematics, Institute for
%Studies in Theoretical Physics and Mathematics~(IPM),  P. O. Box:
%19395-5746 Tehran, Iran.
 and
 Zanjan University, Zanjan, Iran ({\tt najafsm@yahoo.com}).} $^*$}
\date{}
%%%%%%%%%%%%%%%%%%%%%%%%%%%%%%%%%%%%%%%%%%%%%%%%%%%%%%%%%%%%%%%%%%%%%%
\begin{document}

\maketitle
\begin{abstract}
The concept of $t$-$(v,k)$~trades of block designs previously has
been studied in detail. See for example~A. S. Hedayat (1990) and
Billington (2003). Also Latin trades have been studied in detail
under various  names, see~A. D. Keedwell (2004) for a survey.
Recently Khanban, Mahdian and Mahmoodian have extended the concept
of Latin trades and introduced $\Ts{t}{v}{k}$. Here we study the
spectrum of possible volumes of these trades, $S(t,k)$. Firstly, similarly to
 trades of block designs we consider $(t+2)$ numbers
$s_i=2^{t+1}-2^{(t+1)-i}$, $0\leq i\leq t+1$, as critical points
and then we show that  $s_i\in S(t,k)$, for any $0\leq i\leq t+1$,
\ and if $s\in (s_i,s_{i+1}),0\leq i\leq t$, then $s\notin
S(t,t+1)$.  As an example, we
determine $S(3,4)$ precisely. %$S(2,3)$ is determined by H-L Fu (1980).
\end{abstract}

{\bf Keywords}: $t$-Latin trade, Spectrum,  Latin bitrade

\maketitle

%---------------------------------------------------------------------------------------%

\section{Introduction and Preliminaries}
\noindent Let $V := \{1, 2, \ldots, v\}$ and $V^k$ be the set of
all ordered $k$-tuples of the elements of $V$, i.e.\ $V^k :=
\{(x_1, \ldots, x_k) \mid x_i\in V, i=1, \ldots, k\}$. Also, let
$V_I^t :=\{(u_1,\ldots,u_t)_I\mid u_i\in V, i=1, \ldots, t\}$,
where $I$ is a $t$-subset of  $\{1,\ldots,k\}$. For a pair of
elements of $V^k$ and $V_I^t$, where $I= \{i_1,\ldots,i_t\}$ and
$i_1 < \cdots < i_t$, we define
\[
(u_1,\ldots,u_t)_I\in (x_1,\ldots, x_k) \quad \Longleftrightarrow
\quad u_j=x_{i_j},\qquad j=1,\ldots,t.
\]
%An {\sf orthogonal array} ${\rm OA}_t(v,k,\lambda)$ on the set $V$
%is a collection of $k$-tuples of elements of $V$, such that for
%each  $t$-set $I \subseteq \{1,\ldots,k\}$, every element of
%$V_I^t$ belongs to exactly $\lambda$ elements of the collection.

Next we define {\sf $t$-inclusion matrix} $M = \M{t}{v}{k}$, as
in~\cite{KhanbanMahdianMahmoodian}. The columns of this matrix
correspond to the elements of $V^k$ (in lexicographic order) and its
rows correspond to the elements of $\cup_I V_I^t$, where the union
is over all $t$-subsets of $\{1,\ldots,k\}$. The entries of this
matrix are 0 or 1, and are defined as follows.
\[
{\rm M}_{(u_1,\ldots,u_t)_I, (x_1,\ldots,x_k)} = 1 \quad
\Longleftrightarrow \quad (u_1,\ldots,u_t)_I\in (x_1,\ldots, x_k).
\]

A \ $\SFT{t}{v}{k}$  $T=(T_1,T_2)$ of {\sf volume} $s$ consists of
two disjoint collections $T_1$ and $T_2$, each of $s$ elements from
$V^k$, such that for each  $t$-set $I \subseteq \{1,\ldots,k\}$, and
for every element $(u_1,\ldots ,u_t)_I$ of $V_I^t$, the number of
elements of $T_1$ and $T_2$ that contain $(u_1,\ldots ,u_t)_I$ is
the same. Note that in checking the containment of an element
$(u_1,\ldots ,u_t)_I$, elements of $I$ are arranged in increasing
order. The volume of a Latin trade $T$ is denoted by ${\rm vol}(T)$.
It is clear from the definition above, that  for any $t' \le t$,
every \ $\T{t}{v}{k}$ \ is also a  $\T{t'}{v}{k}$. For simplicity,
the notation of $t$-Latin trade is commonly used for this
combinatorial object. The {\sf spectrum} of $\T{t}{v}{k}$s, $S(t,k)$
is the set of all integers $s$, such that for each $s$ there exists
a $\T{t}{v}{k}$ of volume $s$. A $\T{t}{v}{k}$ of volume $0$ is
considered always to exist, that is a trade with $T_1=T_2=\emptyset$
which will be called {\sf trivial trade}. In a $\T{t}{v}{k}$
$T=(T_1,T_2)$ both collections $T_1$ and $T_2$ cover the same
elements. This set of elements is called the {\sf foundation} of
 $T$ and is denoted by ${\rm found}(T)$.  Note that $v$ can be any
integer such that $v$ is at least the size of the
foundation of $T$.
%---------------------------------------------------------------------------------------%
\begin{example}\label{3-(3,4)l.t}
In the following a $\ITT{3}{3}{4}$ $T=(T_1,T_2)$ of volume $15$ and
with   ${\rm found}(T)=\{1,2,3\}$  is given.
\end{example}

\begin{center}
\begin{tabular}{|c|c|} \hline
$T_1$ &
\begin{tabular}{ccccccccccccccc}
    3&3&2&2&2&1&1&2&2&1&1&3&3&2&2 \\
    3&2&3&2&1&2&1&2&1&2&1&3&2&3&2 \\
    3&3&3&3&3&3&3&2&2&2&2&1&1&1&1 \\
    2&3&3&1&2&2&1&2&1&1&2&3&2&2&3
\end{tabular}
\\ \hline
%\end{tabular}
\end{tabular}  \\[.5cm]
% \hspace*{8 mm}
\begin{tabular}{|c|c|} \hline
 $T_2$ &
\begin{tabular}{ccccccccccccccc}
    3&3&2&2&2&1&1&2&2&1&1&3&3&2&2 \\
    3&2&3&2&1&2&1&2&1&2&1&3&2&3&2 \\
    3&3&3&3&3&3&3&2&2&2&2&1&1&1&1 \\
    3&2&2&3&1&1&2&1&2&2&1&2&3&3&2
\end{tabular}
\\ \hline
\end{tabular}
\end{center}
%
%--------------------------------------------------------------------%
As it is noted in~\cite{KhanbanMahdianMahmoodian}, the set of all
$\Ts{t}{v}{k}$ is a subset of  the null space of the $t$-inclusion
matrix $M = \M{t}{v}{k}$. Also $t$-Latin trades have a close
relation with orthogonal arrays. For example, the intersection
problem of two orthogonal arrays may be studied as a problem in
$t$-Latin trades.

 One of the important questions
is:
\begin{ques}
\label{spectrumquestion1}%
What is the spectrum of $\ITTs{t}{v}{k}$?
\end{ques}
Similar question about the spectrum of trades of block designs was
raised in~\cite{MR1196125}, and two basic conjectures were
stated. Since then many results on this subject are published. For
a survey see \cite{MR1056530} and~\cite{MR2041871}.

The special case of $\Ts{2}{v}{3}$ is previously studied in detail
and is referred with different names such as ``disjoint and mutually
balanced'' (DMB) partial Latin squares by Fu and Fu (see for
example~\cite{MR1125351}), as an ``exchangeable partial groupoids''
by Dr{\'a}pal and Kepka~\cite{MR733686}
 as a ``critical partial Latin square''
(CPLS)  by Keedwell~(\cite{MR96a:05027} and \cite{MR1393712}), and
as a ``Latin interchange'' by Diane Donovan
et~al.~\cite{MR98b:05019}, and recently as a ``Latin bitrade''
by~Dr{\'a}pal et~al.~(see\cite{Drapal}, \cite{MR2338087},
and~\cite{Hamalainen}). See~\cite{CavenaghMathSlovac} for a recent
survey.

Following~\cite{MR2338087} we will refer them
     as Latin
bitrades. Let $L_1$ and $L_2$ be two Latin squares of the same
order $n$. A {\sf Latin bitrade} $T=(P,Q)$ consists of two
partial Latin squares $P$ and $Q$ obtained from $L_1$ and $L_2$,
respectively, by deleting their common entries.
Note that $\Ts{2}{v}{3}$ are more general than Latin bitrades:
in the former, repeated blocks and multiple symbols
in rows, columns and cells are allowed.
\begin{example}\label{2-(3,3)l.t}
The following is a Latin bitrade of volume $7$. (It should be noted
that one empty row and one empty column are deleted.)
\end{example}
\begin{center}
%\vspace*{-.6cm}
\begin{tabular}{
 |@{\hspace{1pt}}c@{\hspace{1pt}}
 |@{\hspace{1pt}}c@{\hspace{1pt}}
 |@{\hspace{1pt}}c@{\hspace{1pt}}
 |}
\hline
\m12 & \m21 & \xx.  \\
\hline
\m21 & \m13 & \m32  \\
\hline
\xx. & \m32 & \m23  \\
\hline
\end{tabular}
\end{center}
A result in~\cite{fu} answers the  Question~\ref{spectrumquestion1}
in the special case of Latin bitrades. Here we state several
theorems about existence and nonexistence of $\Ts{t}{v}{k}$ of
specified volumes, and we determine the  spectrum of
$\Ts{t}{v}{t+1}$ for $t=1,2,$ and $3$.
%
%---------------------------------------------------------------------------------------%
%
\section{Possible volumes of $t$-Latin trades}
% We also denote $T$ by  $T=T_1-T_2$.
%
Most of the concepts and definitions about $\Ts{t}{v}{k}$ are
borrowed from $t$-$(v,k)$~trades of block designs. For example:
volume, spectrum, $t$-inclusion matrix, frequency vector, etc.
Specially, we show that there are close relations between the
spectrum of these two combinatorial objects. But, in spite of all
the similarities, some differences are observed between them, both
in properties and in the methods of proof of lemmas and theorems.

By the following lemma all existence results of
$\Ts{t}{v}{t+1}$ can be extended to $\Ts{t}{v}{k}$.
\begin{lemma}\label{t+1tok}
For any $k \ge t+1$,  we have $S(t,t+1)\subseteq S(t,k)$.
\end{lemma}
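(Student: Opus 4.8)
The plan is to take a $\T{t}{v}{t+1}$ $T=(T_1,T_2)$ of volume $s$ and construct from it a $\T{t}{v'}{k}$ of the same volume $s$, for an appropriate $v'\ge v$. The natural idea is a \emph{padding} construction: extend each $(t+1)$-tuple of $T_1$ and $T_2$ by appending $k-(t+1)$ extra coordinates, chosen so that the resulting collections still balance on every $t$-subset of the enlarged coordinate set $\{1,\dots,k\}$.

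First I would handle the case $k=t+2$ and then iterate (equivalently, induct on $k$), since showing $S(t,t+1)\subseteq S(t,t+2)$ and $S(t,t+2)\subseteq S(t,t+3)$, etc., gives the general containment. So fix $T=(T_1,T_2)$, a $\T{t}{v}{t+1}$ of volume $s$. I would introduce a fresh symbol (so work over $V'=\{1,\dots,v+1\}$, or just reuse an existing symbol if $v$ already exceeds the foundation size) and define $T_1'$ and $T_2'$ by appending this same new symbol, say $w$, as the $(t+2)$-nd coordinate to every tuple: if $(x_1,\dots,x_{t+1})\in T_1$ then $(x_1,\dots,x_{t+1},w)\in T_1'$, and likewise for $T_2$. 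Clearly $|T_1'|=|T_2'|=s$, and $T_1',T_2'$ are still disjoint because they disagree already in the first $t+1$ coordinates. It remains to check the balance condition for every $t$-subset $I$ of $\{1,\dots,k\}$ with $k=t+2$.

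The verification splits into two kinds of $t$-subsets $I\subseteq\{1,\dots,t+2\}$. If $t+2\notin I$, then $I\subseteq\{1,\dots,t+1\}$, and the number of tuples of $T_i'$ containing a given $(u_1,\dots,u_t)_I$ equals the number of tuples of $T_i$ containing it, since the appended coordinate is irrelevant; so balance on $I$ for $T'$ follows from balance on $I$ for $T$. If $t+2\in I$, write $I=J\cup\{t+2\}$ with $J\subseteq\{1,\dots,t+1\}$ a $(t-1)$-subset. A tuple of $T_i'$ contains $(u_1,\dots,u_t)_I$ only if its $(t+2)$-nd coordinate matches the prescribed value; since \emph{every} tuple of both $T_1'$ and $T_2'$ has $(t+2)$-nd coordinate equal to $w$, the count is zero unless that prescribed value is $w$, in which case the count equals the number of tuples of $T_i$ containing the induced $(t-1)$-pattern on $J$. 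But $T$, being a $\T{t}{v}{t+1}$, is also a $\T{t-1}{v}{t+1}$ (noted in the excerpt, using $t'\le t$), so it balances on every $(t-1)$-subset of $\{1,\dots,t+1\}$, in particular on $J$; hence $T'$ balances on $I$. This exhausts all cases, so $T'$ is a $\T{t}{v+1}{t+2}$ of volume $s$.

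The step I expect to need the most care is exactly this last case $t+2\in I$: one must notice that the appended coordinate being a single fixed symbol forces the count to collapse to a lower-order balance condition, and then invoke the ``monotonicity in $t$'' remark to close the gap. (Using one common new symbol for both $T_1$ and $T_2$, rather than distinct ones, is what makes the $t+2\in I$ counts agree; using distinct symbols would break disjointness handling or the balance, so that choice is the crux.) Iterating the $k\mapsto k+1$ step $k-(t+1)$ times — appending one fresh coordinate each time — yields $S(t,t+1)\subseteq S(t,k)$ for all $k\ge t+1$, which is the claim. A minor bookkeeping point is that at each stage $v$ may be increased by one; since $v$ is allowed to be anything at least the foundation size, this is harmless.
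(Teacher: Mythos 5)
Your proposal is correct and takes essentially the same approach as the paper: the paper's proof also pads each $(t+1)$-tuple with $k-t-1$ fixed extra coordinates (using a fixed element of $V$ rather than a fresh symbol) and asserts the result is ``clearly'' a $\T{t}{v}{k}$. Your case analysis for $t$-subsets $I$ meeting the new coordinate, reducing to the $(t-1)$-balance of $T$, is exactly the verification the paper leaves implicit.
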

\begin{proof}{ Let $T=(T_1,T_2)$ be a $\T{t}{v}{t+1}$ of
volume $s$. For each ordered $(t+1)$-tuple  in $T_1$ and $T_2$, we
add $(k-t-1)$ fixed elements  $x$ of $V$ as $(t+2)^{\rm nd \/}$ to
$k^ {\rm th \/}$ coordinates. Then we obtain two collections $T_1^*$
and $T_2^*$ containing of ordered $k$-tuples. Clearly
$T^*=(T_1^*,T_2^*)$ is a $\T{t}{v}{k}$ of volume $s$.~}\end{proof}
%---------------------------------------------------------------------------------------%
%
\begin{lemma}\label{2s}
By using any $\ITT{t}{v}{k}$ of volume $s$, we can obtain a \linebreak
$\ITT{(t+1)}{v}{k+1}$ of volume $2s$.
\end{lemma}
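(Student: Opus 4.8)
The plan is to take a $\ITT{t}{v}{k}$ $T = (T_1, T_2)$ of volume $s$ on the value set $V$, and build a $\ITT{(t+1)}{v}{k+1}$ of volume $2s$ by appending a new coordinate whose value encodes which half of the trade an element came from, and then swapping the halves in one of the two copies. Concretely, pick two distinct symbols $a, b \in V$ (recall $v$ may be taken as large as we like, so this is no restriction). First I would form $T_1'$ consisting of all $(k+1)$-tuples $(x_1,\ldots,x_k,a)$ with $(x_1,\ldots,x_k) \in T_1$, together with all $(x_1,\ldots,x_k,b)$ with $(x_1,\ldots,x_k) \in T_2$; and form $T_2'$ consisting of all $(x_1,\ldots,x_k,b)$ with $(x_1,\ldots,x_k) \in T_1$, together with all $(x_1,\ldots,x_k,a)$ with $(x_1,\ldots,x_k) \in T_2$. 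Each of $T_1'$ and $T_2'$ has exactly $2s$ elements, so the volume will be $2s$ once disjointness and the balance condition are checked.

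The next step is to verify the $(t+1)$-balance condition, i.e.\ that for every $(t+1)$-subset $J \subseteq \{1,\ldots,k+1\}$ and every $(u_1,\ldots,u_{t+1})_J$, the tuple appears equally often in $T_1'$ and in $T_2'$. There are two cases. If $k+1 \notin J$, then $J$ is a $(t+1)$-subset of $\{1,\ldots,k\}$, hence in particular contains some $t$-subset $I$; since $T$ is a $\T{t}{v}{k}$ it is balanced on $I$, and a short counting argument shows this forces balance of $T'$ on $J$ (the appended last coordinate is irrelevant, and both $T_1'$ and $T_2'$ project onto $T_1 \cup T_2$ with the same multiplicities). If $k+1 \in J$, write $J = I \cup \{k+1\}$ with $I$ a $t$-subset of $\{1,\ldots,k\}$; then a tuple $(u_1,\ldots,u_t,a)_J$ occurs in $T_1'$ exactly as often as $(u_1,\ldots,u_t)_I$ occurs in $T_1$, and in $T_2'$ exactly as often as $(u_1,\ldots,u_t)_I$ occurs in $T_2$ — these agree because $T$ is $t$-balanced on $I$; the same holds with $b$ in place of $a$, and with any other symbol the count is $0$ on both sides.

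Finally I would check $T_1' \cap T_2' = \emptyset$. A tuple in $T_1'$ ending in $a$ came from $T_1$, while a tuple in $T_2'$ ending in $a$ came from $T_2$, and $T_1 \cap T_2 = \emptyset$; symmetrically for tuples ending in $b$; and a tuple ending in $a$ cannot equal one ending in $b$. So the two collections are disjoint, and $(T_1', T_2')$ is a $\ITT{(t+1)}{v}{k+1}$ of volume $2s$. The only mildly delicate point is the $k+1 \notin J$ case of the balance check, since one must argue that balance on a $t$-subset of a larger index set propagates correctly after the duplication-with-swap; but this is a direct multiplicity count rather than a genuine obstacle. Everything else is bookkeeping.
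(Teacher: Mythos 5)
Your construction is exactly the one the paper uses (its Figure~1 adjoins the two new symbols $x,y$ to the \emph{first} coordinate rather than the last, which is immaterial), and your verification of the two cases of the $(t+1)$-balance condition and of disjointness is correct. One small remark: in the case $k+1\notin J$ the balance does not follow from $T$ being balanced on a $t$-subset $I\subseteq J$ as your first clause suggests, but your parenthetical gives the right reason --- both $T_1'$ and $T_2'$ project onto the same multiset $T_1\cup T_2$ in the first $k$ coordinates, so that case is automatic.
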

\begin{proof}{  Let $T=(T_1,T_2)$ be a $\T{t}{v}{k}$ of
volume $s$. Choose two distinct elements $x$ and $ y \in V$. The
following construction (see Figure~1) produces a
$\T{(t+1)}{v}{k+1}$ $T^*=(T_1^*,T_2^*)$ of volume $2s$. That is, for
constructing $T^*$ we adjoin two new distinct symbols $x$ and $y$
(respectively) to the first component of each element of
    $T_1$    and   $T_2$   (respectively), to obtain     $T_1^*$    and   $T_2^*$
   (respectively).
\begin{center}
\hspace*{-4.8cm}
$T^*_1$  \\[-.55cm]
\hspace*{5.1cm}
$T^*_2$  \\ [.3cm]
%\end{center}
%
\begin{tabular}{|c|ccccccc|}   \hline

 x       &    &    &    &        &    &    &           \\
 x       &    &    &    &        &    &    &           \\
 .       &    &    &    &        &    &    &           \\
 .       &    &    &    & $T_1$  &    &    &           \\
 .       &    &    &    &        &    &    &           \\
%x       &    &    &    &        &    &    &           \\
 x       &    &    &    &        &    &    &           \\    \hline
 y       &    &    &    &        &    &    &           \\
 y       &    &    &    &        &    &    &           \\
 .       &    &    &    &        &    &    &           \\
 .       &    &    &    & $T_2$  &    &    &           \\
 .       &    &    &    &        &    &    &           \\
%y       &    &    &    &        &    &    &           \\
 y       &    &    &    &        &    &    &           \\    \hline
 \end{tabular}        \qquad
 \begin{tabular}{|c|ccccccc|}   \hline
   x       &    &   &    &            &    &     &           \\
   x       &    &   &    &            &    &     &           \\
   .       &    &   &    &            &    &     &           \\
   .       &    &   &    & $T_2$      &    &     &           \\
   .       &    &   &    &            &    &     &           \\
  %x       &    &   &    &            &    &     &           \\
   x       &    &   &    &            &    &     &           \\    \hline
   y       &    &   &    &            &    &     &           \\
   y       &    &   &    &            &    &     &           \\
   .       &    &   &    &            &    &     &           \\
   .       &    &   &    & $T_1$      &    &     &           \\
   .       &    &   &    &            &    &     &           \\
  %y       &    &   &    &            &    &     &           \\
   y       &    &   &    &            &    &     &           \\    \hline
\end{tabular}    \\[1cm]
{\bf Figure \ 1}  \\[1.5cm]
\end{center}
\vspace*{-2cm}
}\end{proof}
%---------------------------------------------------------------------------------------%
\begin{remark}\label{union}

Assume we have two $\ITTs{t}{v}{k}$, $T=(T_1,T_2)$  and \
$R=(R_1,R_2)$.  Then \ $T+R=(T_1\cup R_1,T_2\cup R_2)$ \
 and \ $T-R=(T_1\cup R_2,T_2\cup R_1)$ \ are two $\ITTs{t}{v}{k}$.
 Note that the elements which appear in both sides are omitted. So
 $T+R$ and  $T-R$ are of
 volumes
 $|T_1|+|R_1|-|T_1\cap R_2|-|T_2\cap R_1|$ and
 $|T_1|+|R_2|-|T_1\cap R_1|-|T_2\cap R_2|$, respectively.
\end{remark}
\begin{remark}\label{corresponding}
If we look at each ordered $k$-tuple in  $T_i$ and $R_i$ , $i=1$,
$2$, as a variable (each element of $T_1$ and $R_1$ with positive sign
and each element of $T_2$ and $R_2$ with negative sign), then the
two operations above coincide with the concept of two algebraic
 $+$ and $-$ operations. For this reason sometimes we denote a
 $\ITT{t}{v}{k}$
$T=(T_1,T_2)$ as $T=(T_1-T_2)$.
\end{remark}
To apply linear algebra, we correspond to each $\T{t}{v}{k}$
$T=(T_1,T_2)$, a frequency vector {\bf T}, where the components
of {\bf T} are corresponded with all elements of $V^k$ (in
lexicographic order). For each $x\in V^k$, {\bf T}$(x)$ is
defined as in the following: \\ [.4cm] ${\bf T}(x)=\left
\{\begin{array}{ll}
\hspace*{3.4mm} p    &       \hspace{3cm} {\rm if} \ x\in T_1  \   (p \ {\rm times}),  \\
-q    &     \hspace{3cm} {\rm if} \ x\in T_2  \  (q  \ {\rm times}),  \\
\hspace*{3.4mm} 0    &     \hspace{3cm} {\rm    otherwise.}   \end{array}
\right. $  \\[.1cm]

Let {\rm M} be the $t$-inclusion matrix $M = \M{t}{v}{k}$. Then
it is an easy exercise to prove that  {\rm M}{\bf
T}=$\overline{\bf 0}$, where $\overline{\bf 0}$ is the zero
vector. And conversely if {\bf T}, with integer components,
is a vector in the  null space
of {\rm M}  then it determines a
$\T{t}{v}{k}$ $T=(T_1,T_2)$. $T_1$ is obtained from the positive
components and $T_2$ is obtained from the negative components of
vector {\bf T}. In other words, there is a one-to-one
correspondence between the null space of {\rm M} over the ring
$\mathbb Z$ and the set of all $\Ts{t}{v}{k}$. The following lemma
is the fact mentioned in Remark~\ref{union}, but in a linear
algebraic approach.
%
%---------------------------------------------------------------------------------------%
\begin{lemma}\label{T+R}
Consider two $\ITTs{t}{v}{k}$, $T=(T_1-T_2)$  and  $R=(R_1-R_2)$.
Then $T+R=(T_1+R_1)-(T_2+R_2)$ is also a $\ITT{t}{v}{k}$.
\end{lemma}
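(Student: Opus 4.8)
The plan is to unwind the definitions so that the claim becomes a statement about vectors in the null space of the $t$-inclusion matrix, and then invoke linearity. Concretely, let $M = \M{t}{v}{k}$, and let $\mathbf{T}$ and $\mathbf{R}$ be the frequency vectors associated to the $t$-Latin trades $T=(T_1-T_2)$ and $R=(R_1-R_2)$, as defined just before the statement. By the fact recalled in the paragraph preceding the lemma, $M\mathbf{T} = \overline{\mathbf{0}}$ and $M\mathbf{R} = \overline{\mathbf{0}}$, and both vectors have integer components. Since $M$ is a linear map, $M(\mathbf{T}+\mathbf{R}) = M\mathbf{T} + M\mathbf{R} = \overline{\mathbf{0}}$, and $\mathbf{T}+\mathbf{R}$ again has integer components; hence $\mathbf{T}+\mathbf{R}$ lies in the integer null space of $M$, and therefore, by the one-to-one correspondence mentioned in the excerpt, it is the frequency vector of some $\ITT{t}{v}{k}$.

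The remaining step is to check that the trade determined by $\mathbf{T}+\mathbf{R}$ is exactly $(T_1+R_1)-(T_2+R_2)$ in the multiset notation of Remark~\ref{corresponding} and Remark~\ref{union}. For each $x\in V^k$, write $\mathbf{T}(x) = p_T(x) - q_T(x)$ where $p_T(x)$ is the multiplicity of $x$ in $T_1$ and $q_T(x)$ its multiplicity in $T_2$ (one of these being zero by disjointness of $T_1,T_2$), and similarly $\mathbf{R}(x) = p_R(x) - q_R(x)$. Then $(\mathbf{T}+\mathbf{R})(x) = \bigl(p_T(x)+p_R(x)\bigr) - \bigl(q_T(x)+q_R(x)\bigr)$, which is precisely the signed multiplicity of $x$ in the formal combination $(T_1+R_1)-(T_2+R_2)$ after cancelling the common occurrences — i.e.\ the positive part of this integer gives the multiplicity of $x$ in the first component and the negative part gives its multiplicity in the second. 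So the trade recovered from $\mathbf{T}+\mathbf{R}$ by taking positive components for the first collection and negative components for the second is exactly $(T_1+R_1)-(T_2+R_2)$, which is what we wanted.

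There is essentially no serious obstacle here: the lemma is the linear-algebraic restatement of Remark~\ref{union}, and the only thing to be careful about is the bookkeeping of multiplicities and signs when occurrences of the same $k$-tuple on opposite sides cancel. In particular one should note that it can happen that a $k$-tuple appearing in $T_1$ also appears in $R_2$, so the ``disjointness'' of the two collections in $T+R$ is only restored after this cancellation — but this is automatic from the fact that a single integer component $(\mathbf{T}+\mathbf{R})(x)$ cannot be simultaneously positive and negative, so the resulting collections are indeed disjoint. I would present the proof in two short lines: first $M(\mathbf{T}+\mathbf{R})=\overline{\mathbf{0}}$ by linearity, then identify the associated trade with $(T_1+R_1)-(T_2+R_2)$ via the componentwise description above.
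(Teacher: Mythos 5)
Your proof is correct and follows exactly the paper's own argument: pass to frequency vectors, use linearity of the $t$-inclusion matrix to conclude $M(\mathbf{T}+\mathbf{R})=\overline{\mathbf{0}}$, and read off the resulting trade from the signs of the components. The extra bookkeeping you include on multiplicities and cancellation is a welcome elaboration of a step the paper leaves implicit, but it is not a different method.
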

\begin{proof}{ Let {\bf T} and {\bf R} be the frequency vectors of
$T$ and $R$, respectively, and {\rm M} be the
$t$-inclusion matrix $M = \M{t}{v}{k}$. We have {\rm M}{\bf T}=
$\overline{\bf 0}$ and {\rm M}{\bf R}= $\overline{\bf 0}$. Thus
{\rm M}{\bf (T+R)}= $\overline{\bf 0}$, i.e. {\bf (T+R)} belongs
to the null space of {\bf M}. Therefore $T+R$ is a $\T{t}{v}{k}$.
}\end{proof}
%---------------------------------------------------------------------------------------%
\begin{remark}
In the previous lemma if \ $T_1\cap R_2=R_1\cap T_2=\emptyset$, then
$\vol (T+R)= \vol  (T)+ \vol  (R)$.
\end{remark}
In~\cite{KhanbanMahdianMahmoodian},
a
$\T{t}{v}{k}$  is represented by  a homogeneous polynomial of order $k$ as follows.
Let $P=P(x_1,x_2,\ldots ,x_{v})$ be a homogeneous polynomial of
order $k$ whose terms are ordered multiplicatively (meaning that for
example for $i_1 \neq i_2$ the term $x_{i_1}x_{i_2}x_{i_3}\cdots
x_{i_k}$ is different from $x_{i_2}x_{i_1}x_{i_3}\cdots x_{i_k}$,
etc.) Now we correspond a frequency vector {\bf T}, with $v^k$
components (in lexicographic order) to polynomial $P$ as in the
following:

For $x=(i_1,i_2,\ldots ,i_k)\in V^k$ we let {\bf T}($x$) be the
coefficient of $x_{i_1}x_{i_2}x_{i_3}\cdots x_{i_k}$ in $P$. So, if
the resulting vector {\bf T} satisfies the equation {\rm M}{\bf T}=
$\overline{\bf 0}$,  then we refer to polynomial $P$ as a
$\T{t}{v}{k}$. It is easy to show that this definition is equivalent
to the previous definition of $\T{t}{v}{k}$. This representation
 helps us in constructing
$\Ts{t}{v}{k}$ of desired volumes.

The following theorem is
proved by using polynomial representation  of \linebreak $\Ts{t}{v}{k}$.
%
%---------------------------------------------------------------------------------------%
%Theorem 1 - Mention some conditions for theorem 1, such as k\geq t+1 and perhaps
%v\geq k+t+1

%
\begin{theorem}\label{existencesi}
For each \  $s_i=2^{t+1}-2^{(t+1)-i}$, \ $0\leq i\leq t+1$, \
there exists a $\ITT{t}{v}{k}$ of volume $s_i$ with $k\geq t+1$.
\end{theorem}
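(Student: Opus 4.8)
The plan is to construct, for every $i$ with $0\le i\le t+1$, an explicit $\ITT{t}{v}{t+1}$ of volume $s_i$ via the polynomial representation described above, and then to invoke Lemma~\ref{t+1tok} if one wants the assertion for every $k\ge t+1$. For $i=0$ there is nothing to do: $s_0=2^{t+1}-2^{t+1}=0$ and the trivial trade has volume $0$. So fix $i$ with $1\le i\le t+1$ and note that $s_i=(2^i-1)\,2^{t+1-i}$.

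The first ingredient is the observation that products of linear differences are trades: if $\ell_1,\ldots,\ell_n$ are linear forms $\ell_j=x_{a_j}-x_{b_j}$ with $a_j\neq b_j$, then the $n$-fold ordered product $\ell_1\ell_2\cdots\ell_n$, regarded as a homogeneous polynomial of degree $n$, represents an $\ITT{(n-1)}{v}{n}$. Indeed, its coefficient vector is $\mathbf T(\xi_1,\ldots,\xi_n)=\prod_{j=1}^{n}c_j(\xi_j)$, where $c_j(a_j)=1$, $c_j(b_j)=-1$, and $c_j$ vanishes elsewhere; for an $(n-1)$-subset $I\subseteq\{1,\ldots,n\}$ with unique omitted coordinate $m$, summing over the value in coordinate $m$ detaches the factor $c_m(a_m)+c_m(b_m)=1+(-1)=0$, so every entry of $M\mathbf T$ is zero. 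In particular, for any $i\ge1$ both $(x_1-x_2)^i$ and $(x_1-x_3)^i$ are $\ITTs{(i-1)}{v}{i}$; each expands into $2^i$ pairwise distinct monomials, $2^{i-1}$ with coefficient $+1$ and $2^{i-1}$ with coefficient $-1$, and hence has volume $2^{i-1}$.

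Next set $Q_i:=(x_1-x_2)^i-(x_1-x_3)^i$; by Lemma~\ref{T+R} this is again an $\ITT{(i-1)}{v}{i}$. The monomials occurring in $(x_1-x_2)^i$ are exactly the ordered $i$-letter words over $\{x_1,x_2\}$ and those occurring in $(x_1-x_3)^i$ exactly the words over $\{x_1,x_3\}$, so the only monomial common to the two is $x_1^i$, whose coefficient is $+1$ in each; hence $x_1^i$ cancels in $Q_i$ while every other monomial survives with coefficient $\pm1$. A short sign count (the positive monomials of $Q_i$ are the $2^{i-1}-1$ positive monomials of $(x_1-x_2)^i$ other than $x_1^i$, together with the $2^{i-1}$ monomials carrying a minus sign in $(x_1-x_3)^i$, and symmetrically for the negative ones) shows that each side of $Q_i$ has exactly $2^i-1$ monomials, i.e.\ $\mathrm{vol}(Q_i)=2^i-1$. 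Now apply Lemma~\ref{2s} to $Q_i$ exactly $t+1-i$ times (which makes sense since $0\le i\le t+1$): each application passes from a $\ITT{t'}{v}{t'+1}$ of volume $s$ to a $\ITT{(t'+1)}{v}{t'+2}$ of volume $2s$, so after $t+1-i$ steps one obtains a $\ITT{t}{v}{t+1}$ of volume $(2^i-1)\,2^{t+1-i}=2^{t+1}-2^{t+1-i}=s_i$. (Equivalently, this trade is represented by the single polynomial $(x_1-x_2)^{t+1}-(x_1-x_2)^{t+1-i}(x_1-x_3)^i$.) Finally Lemma~\ref{t+1tok} lifts the conclusion from $k=t+1$ to every $k\ge t+1$.

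The one place that needs genuine care is the volume count for $Q_i$: one must be sure that $x_1^i$ is the \emph{only} monomial that cancels — otherwise the trade could collapse to strictly smaller volume — and that the surviving positive and negative monomials are equinumerous. Both facts become transparent once one notices that $(x_1-x_2)^i$ and $(x_1-x_3)^i$ are supported on letter sets meeting only in $x_1$; everything else is the routine bookkeeping indicated above.
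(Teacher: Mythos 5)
Your proof is correct and, at bottom, the same as the paper's: the paper also realizes $s_i$ as the difference of two products of $t+1$ linear binomials that agree in $t+1-i$ of the factors, so that exactly $2^{t+1-i}$ monomials cancel, and your single polynomial $(x_1-x_2)^{t+1}-(x_1-x_2)^{t+1-i}(x_1-x_3)^{i}$ is precisely an instance of that construction (with repeated rather than all-distinct variables). The only cosmetic difference is that you reach it by first building the volume-$(2^i-1)$ trade $Q_i$ and then doubling $t+1-i$ times via Lemma~\ref{2s}, whereas the paper expands the degree-$(t+1)$ difference directly and counts the cancelled terms.
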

\begin{proof}{
For \ $i=0$ \ the trivial trade is the answer. For each \ $i$,
$1\leq i\leq t+1$, let $T=(T_1-T_2)$ and $R=(R_1-R_2)$ be two
$\Ts{t}{v}{k}$ defined as follows:

 $T=T_1-T_2$ \\
\hspace*{.36cm}$=(x_1-x_2)\cdots(x_{2t-2i+1}-x_{2t-2i+2})(x_{2t-2i+3}-x_{2t-2i+4})\cdots$

%\vspace*{-8mm} \\
\hspace*{.73cm}$(x_{2t+1}-x_{2t+2})x_{2t+3}\cdots x_{k+t+1}$,
\quad and
\vspace*{6mm} \\
$R=R_1-R_2$ \\ \hspace*{.36cm}$=-(x_1-x_2)\cdots
(x_{2t-2i+1}-x_{2t-2i+2})(y_{2t-2i+3}-x_{2t-2i+4})\cdots$  \\
%\vspace*{-8mm} \\
\hspace*{.93cm} $(y_{2t+1}-x_{2t+2})x_{2t+3}\cdots x_{k+t+1}$, \\

where inside each parenthesis variables are different from each
other, and also for each $j$, \ $y_{j} \neq x_{j}$. Now \ $T+R$ \
is a $\T{t}{v}{k}$, by Lemma~\ref{T+R}. $T$~and $R$ are the same
in $((t+1)-i)$ parentheses. So, in $T+R$, the following terms are
cancelled out with their negatives:
\\
$(x_1-x_2)\cdots(x_{2t-2i+1}-x_{2t-2i+2})x_{2(t-i+2)}\cdots
x_{2(t+1)}x_{2t+3}\cdots x_{k+t+1}$. %\\
Thus $T+R$ is a $\T{t}{v}{k}$ of volume
$s_i=2^{t+1}-2^{(t+1)-i}$. }
\end{proof}

%---------------------------------------------------------------------------------------%

To continue our discussion we need to define  levels of a trade.  We may
decompose a $t$-Latin trade $T$ and obtain other $(t-1)$-Latin
trades. Let $T=(T_1,T_2)$ be a $\T{t}{v}{k}$ and let $j \in
\{1,\ldots,k\}$ and $x \in V$.  Take \ $T'_i=\{(x_1, \ldots ,x_{k}) |
(x_1, \ldots ,x_{k})\in T_i \ {\rm and} \ x_j=x\}$, for $i=1$,
$2$. Delete $x$ from the $j^{\rm th \/}$ coordinate in all elements of $T'_1$ and $T'_2$
to obtain $T''_1$ and $T''_2$, respectively. Now $T''=(T''_1,T''_2)$  is a
 $\T{(t-1)}{v'}{k-1}$,
which is called a {\sf level trade} of $T$ in the direction
of~$j$.

%---------------------------------------------------------------------------------------%
\begin{example}\label{3level trade}
In  Example \ref{3-(3,4)l.t} for $j=3$, there exist three level
trades. For example, for $x=3$ the level trade in the direction
of~$j=3$ is as follows.
\end{example}
%\vspace*{-8mm}
\begin{center}
\begin{tabular}{|c|ccccccc|}        \hline

   \     &3&3&2&2&2&1&1      \\
$T''_1$  &3&2&3&2&1&2&1      \\
   \     &2&3&3&1&2&2&1      \\     \hline

\end{tabular}  \\[.5cm]
% \hspace*{8 mm}
%
%\begin{center}
\begin{tabular}{|c|ccccccc|}       \hline
         &3&3&2&2&2&1&1      \\
$T''_2$  &3&2&3&2&1&2&1      \\
         &3&2&2&3&1&1&2      \\    \hline

\end{tabular}  % \\[.4cm]
\end{center}
%\end{itemize}
Note that the level trade above is a Latin bitrade, which  also can
be represented as in Example~\ref{2-(3,3)l.t}.
\begin{lemma}\label{level}
Let $T=(T_1,T_2)$ be a $\ITT{t}{v}{t+1}$ of volume $s$ with only
two non-trivial level trades in some direction $j$. Then the
volume of these  level trades are equal, say to $a$, and so
$s=2a$.
\end{lemma}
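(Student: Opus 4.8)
The plan is to analyze what it means for $T=(T_1,T_2)$ to have exactly two non-trivial level trades in direction $j$. Fix the direction $j$. For each symbol $x \in V$ that occurs in the $j$th coordinate of some element of the foundation, we get a level trade $T^{(x)} = (T''_1{}^{(x)}, T''_2{}^{(x)})$ which is a $\T{(t-1)}{v'}{t}$. Since every element of $T_1$ has \emph{some} value in its $j$th coordinate, the sets $T_1'{}^{(x)}$ partition $T_1$ as $x$ ranges over $V$, and similarly the $T_2'{}^{(x)}$ partition $T_2$; deleting the $j$th coordinate is a bijection on each block, so $\sum_x \vol(T^{(x)}) = |T_1| = s$ and likewise $\sum_x \vol(T^{(x)}) = |T_2| = s$ (consistent, since $|T_1|=|T_2|=s$ for any trade). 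By hypothesis all but two of these level trades are trivial, i.e.\ have volume $0$, so if the two non-trivial ones are $T^{(x)}$ and $T^{(y)}$ with volumes $a$ and $b$, then $a+b = s$.

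The key step is to show $a=b$. The idea is to use the defining balance condition of $T$ for the single coordinate $I=\{j\}$ (this is legitimate because $T$, being a $t$-trade with $t\ge 1$, is in particular a $1$-trade). For each $x\in V$, the number of elements of $T_1$ whose $j$th coordinate equals $x$ must equal the number of elements of $T_2$ whose $j$th coordinate equals $x$; but those counts are exactly $|T_1'{}^{(x)}| = \vol(T^{(x)})$ and $|T_2'{}^{(x)}| = \vol(T^{(x)})$ — so this alone only re-confirms that each level trade has a well-defined volume, not that the two agree. To pin down $a=b$ I would instead argue as follows: among the $v$ possible symbols in coordinate $j$, only $x$ and $y$ occur at all (the other level trades being trivial means no element of $T_1\cup T_2$ has any other symbol there). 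Now apply the balance condition for a $t$-set $I$ containing $j$, say $I=\{j\}\cup J$ where $J$ is a $(t-1)$-subset of $\{1,\dots,t+1\}\setminus\{j\}$; this forces, for the prescribed value $x$ in coordinate $j$ and any prescribed values on $J$, that $T^{(x)}$ itself is a $(t-1)$-trade (which we already knew), and symmetrically for $y$. The genuinely extra input is the $t$-balance on sets $I$ \emph{not} containing $j$: fixing values on such an $I$ and summing the counts over the two possible $j$-values $x,y$ shows that the ``profile'' of $T^{(x)}$ together with that of $T^{(y)}$ is balanced, which does not immediately give $a=b$ either.

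The honest route to $a=b$ is combinatorial and short: since only two symbols $x,y$ appear in coordinate $j$, consider the $(t-1)$-set obtained by \emph{omitting} coordinate $j$ is not a $t$-set, so instead look at any $t$-subset $I$ of $\{1,\dots,t+1\}$ with $j\notin I$ — but there is only one such subset when $k=t+1$, namely $I = \{1,\dots,t+1\}\setminus\{j\}$. Fix an element $(u_1,\dots,u_t)_I$ of $V_I^t$. The elements of $T_1$ containing it are precisely those whose coordinates off $j$ are $(u_1,\dots,u_t)$; each such element has its $j$th coordinate equal to $x$ or to $y$, and the count in $T_1$ equals the count in $T_2$. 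Splitting each side by the value of the $j$th coordinate, and noting that within $T^{(x)}$ (resp.\ $T^{(y)}$) the tuple $(u_1,\dots,u_t)_I$ becomes a \emph{full} $(t)$-tuple on all $t$ coordinates of the $(t)$-ary objects of the level trade, one sees that the ``cell counts'' of $T^{(x)}$ and $T^{(y)}$ must be complementary in a way forcing each level trade to be a permutation of the same underlying cell structure; a direct double-count of $\sum_{(u)_I}$ (the count of matching elements) gives $\vol(T^{(x)}) = \vol(T^{(y)})$, i.e.\ $a=b$, whence $s = a+b = 2a$.

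I expect the main obstacle to be making the last paragraph rigorous without hand-waving: specifically, arguing carefully that when only two symbols occur in the chosen direction, the $t$-balance conditions of $T$ restricted appropriately are \emph{equivalent} to $T^{(x)}$ and $T^{(y)}$ being $(t-1)$-trades on the same foundation with equal volume. The cleanest way to close this gap is probably the frequency-vector/inclusion-matrix formalism of the excerpt: write $\mathbf{T} = \mathbf{T}^{(x)} + \mathbf{T}^{(y)}$ (supported on disjoint coordinate blocks indexed by the $j$th symbol), observe that $M\mathbf{T} = \overline{\mathbf 0}$ splits into equations that, using that only two blocks are nonzero, decouple into $M'\mathbf{T}^{(x)} = \overline{\mathbf 0}$, $M'\mathbf{T}^{(y)} = \overline{\mathbf 0}$ (each a $(t-1)$-trade) \emph{plus} one coupling equation from the row set corresponding to $t$-subsets avoiding $j$, and then read off from that coupling equation that the total mass of $\mathbf{T}^{(x)}$ equals that of $\mathbf{T}^{(y)}$, i.e.\ $a = b$.
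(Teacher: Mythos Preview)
Your final frequency-vector approach is correct and is essentially the paper's argument. The paper's proof is one line: with only two symbols $x,y$ appearing in coordinate~$j$, the structure of $T$ is forced to be that of Figure~1 (the construction of Lemma~\ref{2s}), so the two level trades are swaps of one another and in particular have the same volume. Your ``coupling equations'' say exactly this: since $k=t+1$, the unique $t$-set $I$ avoiding $j$ yields, for every full tuple $u$ on $I$, the equation $\mathbf{T}^{(x)}(u)+\mathbf{T}^{(y)}(u)=0$, i.e.\ $\mathbf{T}^{(y)}=-\mathbf{T}^{(x)}$ as frequency vectors, which is precisely the statement that $T^{(y)}=(T''^{(x)}_2,T''^{(x)}_1)$. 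You undersold your own conclusion---the coupling gives this identity outright, not merely equality of total mass---and the first three exploratory paragraphs (the attempts via $I=\{j\}$, via $I\ni j$, and the hand-wavy double count) can be deleted.
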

\begin{proof}{ Without loss of generality assume $j=1$. It is easy to see that
the structure of $T=(T_1,T_2)$ is the same as structure of $T^*$ in Figure~1, where
$k=t+1$. So the two level trades of $T$ in the direction of $j=1$
have the same volume $a$.  Moreover, if $T'=(T_1,T_2)$ is one of
these level trades, then the other level trade is $T''=(T_2,T_1)$.
}\end{proof}

%---------------------------------------------------------------------------------------%
Now we investigate the spectrum of $\Ts{t}{v}{t+1}$.
\begin{proposition}\label{S(1,2)} $S(1,2)=\mathbb N_0\backslash \{1\}$.
\end{proposition}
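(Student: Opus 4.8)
{
The plan is to establish the two inclusions separately: that $1\notin S(1,2)$, and that $s\in S(1,2)$ for every $s\in\mathbb N_0$ with $s\neq 1$ (the volume-$0$ trivial trade covering the case $s=0$).

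First I would rule out volume $1$. If $T=(T_1,T_2)$ were a $\T{1}{v}{2}$ of volume $1$, write $T_1=\{(a,b)\}$ and $T_2=\{(c,d)\}$ with $(a,b)\neq(c,d)$. Applying the defining balance condition with $I=\{1\}$ to the tuple $(a)_I$ forces $c=a$ (the count on the $T_1$ side is $1$, hence it must be $1$ on the $T_2$ side as well), and applying it with $I=\{2\}$ to $(b)_I$ forces $d=b$; hence $(a,b)=(c,d)$, contradicting disjointness. So $1\notin S(1,2)$.

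For the positive direction I would give a single uniform construction valid for all $s\ge 2$. Fix a derangement $\sigma$ of $\{1,\ldots,s\}$ --- one exists precisely because $s\ge 2$ --- and put $T_1=\{(i,i):1\le i\le s\}$ and $T_2=\{(i,\sigma(i)):1\le i\le s\}$. Then $T_1\cap T_2=\emptyset$ since $\sigma$ has no fixed point, $|T_1|=|T_2|=s$, and the balance condition holds coordinatewise: for $I=\{1\}$ every symbol occurs exactly once as a first coordinate in each of $T_1,T_2$, and for $I=\{2\}$ the same holds because both the identity and $\sigma$ are permutations of $\{1,\ldots,s\}$. Thus $T$ is a $\T{1}{s}{2}$ of volume $s$, so $s\in S(1,2)$. (Alternatively, in the spirit of the preceding results: Theorem~\ref{existencesi} with $t=1$ supplies trades of volumes $s_1=2$ and $s_2=3$; since every integer $s\ge 2$ is of the form $2a+3b$ with $a,b\ge 0$, taking the union on pairwise disjoint symbol sets of $a$ copies of the volume-$2$ trade and $b$ copies of the volume-$3$ trade gives, by Remark~\ref{union}, a $\T{1}{v}{2}$ of volume $s$.)

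I do not expect a genuine obstacle here; the only delicate point is the non-existence of a volume-$1$ trade, which amounts to the observation that a single block of $T_1$ cannot be balanced against a \emph{different} single block of $T_2$ on both coordinates simultaneously. Once that is in hand the construction for $s\ge 2$ is routine, and combining the two parts yields $S(1,2)=\mathbb N_0\backslash\{1\}$.
}
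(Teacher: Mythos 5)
Your proof is correct and is essentially the paper's own argument: the paper also takes $T_1=\{(i,i)\}$ and $T_2=\{(i,\sigma(i))\}$, just with the specific cyclic derangement $\sigma(i)=i+1 \pmod s$ instead of an arbitrary one. The only difference is that you spell out the impossibility of volume $1$, which the paper dismisses as clear.
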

\begin{proof}{ It is clear that a $\T{1}{v}{2}$ of volume
$1$ does not exist. Suppose $s\geq 2$, the following array form a
$\T{1}{v}{2}$ of volume $s$.
%-----------------------------------------------------------------------------
\begin{center}
\begin{tabular}{|c|c|} \hline
$T_1$ &
\begin{tabular}{cccccc}
        1&2&3&$\cdots$ &$s-1$&$s$     \\
        1&2&3&$\cdots$ &$s-1$&$s$
\end{tabular}
\\ \hline
\end{tabular}  \\[.5cm]
\begin{tabular}{|c|c|} \hline
 $T_2$ &
\begin{tabular}{cccccc}
        1&2&3&$\cdots$ &$s-1$&$s$    \\
        2&3&4&$\cdots$ &$s$&1
\end{tabular} %\\[1cm]
\\ \hline
\end{tabular}
\end{center}
\vspace*{-6.5mm} }\end{proof}

The following result of H-L. Fu. is an instrument in building an
induction base.
\begin{proposition}\label{Fu}~{\rm \cite{fu}}
A Latin bitrade  $T=(P,Q)$ of volume $s$ exists if and only if $s\in
\mathbb N_0\backslash \{1,2,3,5\}$.
\end{proposition}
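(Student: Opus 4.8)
The plan is to prove the two implications separately, with essentially all of the content concentrated in one elementary observation. For the nonexistence of Latin bitrades of volumes $1,2,3,5$, the first step is to note that in any Latin bitrade $T=(P,Q)$ every non-empty row of $P$ contains at least two filled cells, and likewise every non-empty column and every symbol class. This holds because $P$ and $Q$ are obtained from Latin squares $L_1,L_2$ by deleting their common entries, so the $i$-th rows of $L_1$ and $L_2$ agree off $T$, whence the multiset of symbols that $P$ places in row $i$ coincides with that of $Q$, while $P$ and $Q$ disagree in every filled cell; a row with a single filled cell would then force that cell to carry the same symbol in $P$ and in $Q$, a contradiction. Running the same argument on the columns of $L_1,L_2$, and on the permutation matrix of a fixed symbol, gives the column and symbol versions. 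Granting this: if $s\le 3$ there cannot be two non-empty rows (they would require at least four cells), so all cells lie in one row and every occupied column is a singleton, which is impossible; and if $s=5$ there are at most two non-empty rows and not exactly one (again the columns would be singletons), hence exactly two non-empty rows, of sizes $2$ and $3$, and symmetrically exactly two non-empty columns, but two rows and two columns meet in at most four cells, contradicting volume $5$.

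For the existence direction I would first assemble a few small explicit bitrades: volume $0$ is the trivial trade; volume $4$ is an intercalate (the trade supported on a $2\times2$ Latin subsquare); volume $7$ is the bitrade of Example~\ref{2-(3,3)l.t}; and volumes $6$ and $9$ come from pairs of order-$3$ Latin squares — the cyclic table and its cyclic shift by $1$ disagree in all nine cells, giving volume $9$, while two order-$3$ squares agreeing in exactly three cells disagree in the remaining six, giving volume $6$. The inductive step is then: given a Latin bitrade $T$ of volume $s$, place an intercalate $R$ on three entirely new rows, columns and symbols (legitimate since $v$ may be taken arbitrarily large) and form $T+R$ as in Remark~\ref{union}; because $T$ and $R$ involve disjoint rows, columns and symbols, no cancellation occurs and $T+R$ is a Latin bitrade of volume $s+4$. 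Since closing $\{4,6,7,9\}$ under $s\mapsto s+4$ hits exactly one residue class representative for each of $0,1,2,3\pmod 4$ and so yields $\{4\}\cup\{s:s\ge 6\}$, together with $s=0$ this gives precisely $\mathbb N_0\setminus\{1,2,3,5\}$.

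The step I expect to be the real obstacle is the nonexistence of volume $5$; volumes $1,2,3$ and the whole construction side are bookkeeping once the ``at least two cells per line and per symbol'' lemma is in place. A secondary point to keep honest is that the small base trades of volumes $6$ and $9$ must genuinely arise from a pair of Latin squares (so that they qualify as Latin bitrades in the strict sense of the definition, not merely balanced partial arrays), but this is visible by inspection for the order-$3$ examples. This plan recovers H.-L. Fu's theorem and follows the line of the proof in \cite{fu}.
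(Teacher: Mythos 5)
The paper does not prove this proposition at all: it is imported verbatim from H.-L.\ Fu's thesis \cite{fu} and used as a black box in Proposition~\ref{S(2,3)}. So you are not competing with an argument in the text; you are supplying one. Your nonexistence half is correct and complete: the observation that every non-empty row (and column) of $P$ carries the same symbol multiset as the corresponding row of $Q$ while disagreeing cell-by-cell, hence has at least two filled cells, is exactly the right lemma, and the counting for $s\le 3$ (one row forces singleton columns) and $s=5$ (two rows and two columns meet in at most four cells) is airtight. The base constructions of volumes $0,4,6,7,9$ exist as claimed, and closing $\{4,6,7,9\}$ under $s\mapsto s+4$ does give $\{4\}\cup\{s\ge 6\}$.

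The one genuine gap is in the inductive step. Remark~\ref{union} and Lemma~\ref{T+R} only guarantee that $T+R$ is a $\T{2}{v}{3}$, and the paper explicitly warns that $\Ts{2}{v}{3}$ are \emph{more general} than Latin bitrades; the proposition, as stated, concerns Latin bitrades in the strict sense of a difference of two Latin squares. You flag this issue for the base trades but not for the union, which is where it actually bites: the naive ``direct sum'' of the two realizing pairs of Latin squares does not complete to a pair of larger Latin squares. The standard repair is to note that any disjoint pair of partial Latin squares $(P,Q)$ on the same cells with matching row and column symbol sets is automatically a strict Latin bitrade: embed $P$ in a Latin square $L_1$ (Evans' embedding theorem) and put $L_2=(L_1\setminus P)\cup Q$; the balance conditions make $L_2$ a Latin square, and $L_1,L_2$ differ exactly on the cells of $P$. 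With that observation inserted, your union step (and indeed the whole proof) is sound. A trivial slip: an intercalate occupies two, not three, new rows, columns and symbols. Note also that for the only use the paper makes of this proposition --- producing $\Ts{2}{v}{3}$ of the stated volumes --- the weaker, purely combinatorial version of your union step already suffices.
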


%---------------------------------------------------------------------------------------%
\begin{proposition}\label{S(2,3)}
$S(2,3)=\mathbb N_0\backslash \{1,2,3,5\}.$
\end{proposition}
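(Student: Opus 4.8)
The plan is to prove $S(2,3)=\mathbb{N}_0\backslash\{1,2,3,5\}$ by establishing both inclusions. For the inclusion $\mathbb{N}_0\backslash\{1,2,3,5\}\subseteq S(2,3)$ I would simply note that, reading a filled cell $(r,c)$ of a Latin bitrade carrying symbol $z$ as the ordered triple $(r,c,z)$, a Latin bitrade of volume $s$ becomes a $2$-$(v,3)$ Latin trade of the same volume: the three defining equalities, for $I=\{1,2\}$, $I=\{1,3\}$ and $I=\{2,3\}$, are exactly the ``same filled cells'', ``same symbols in each row'' and ``same symbols in each column'' balance conditions of a bitrade, and the two parts are disjoint because $L_1$ and $L_2$ disagree on every filled cell. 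Hence Proposition~\ref{Fu} supplies a $2$-$(v,3)$ Latin trade of volume $s$ for each $s\in\mathbb{N}_0\backslash\{1,2,3,5\}$, and the real content is to show that no $2$-$(v,3)$ Latin trade has volume $1$, $2$, $3$ or $5$.

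For that I would use level trades together with Proposition~\ref{S(1,2)} and Lemma~\ref{level}. Let $T=(T_1,T_2)$ be a nontrivial $2$-$(v,3)$ Latin trade of volume $s$; since repeated triples are allowed, $T_1$ and $T_2$ are treated as multisets throughout. Fix a direction $j\in\{1,2,3\}$. The level trades of $T$ in direction $j$ partition the elements of $T_1$ (and of $T_2$) according to their $j$-th coordinate, each of them is a $1$-$(v,2)$ Latin trade, and the volume of the level trade for a value $x$ is the number of elements of $T_1$ whose $j$-th coordinate is $x$. By Proposition~\ref{S(1,2)} a $1$-$(v,2)$ Latin trade never has volume $1$, so every nontrivial level trade of $T$ has volume at least $2$. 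Consequently the number of nontrivial level trades of $T$ in direction $j$ equals the number of distinct values taken by the $j$-th coordinate on $T_1$, and these volumes are all at least $2$ and sum to $s$.

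I would then record the ``constant'' obstruction: $T_1$ cannot consist of a single triple $(a,b,c)$ with multiplicity $s$, for then the equalities for $I=\{1,2\}$ and $I=\{1,3\}$ would force coordinates $1,2$ of every element of $T_2$ to equal $(a,b)$ and coordinates $1,3$ to equal $(a,c)$, so $T_2$ would also consist of $(a,b,c)$ with multiplicity $s$, giving $T_2=T_1$ and contradicting disjointness. Now, if $1\le s\le 3$ then two nontrivial level trades in one direction would force $s\ge 4$, so each of the three directions carries exactly one nontrivial level trade, of volume $s$; hence every coordinate takes a single value on $T_1$, i.e.\ $T_1$ is a constant triple with multiplicity $s$ --- the case just excluded. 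If $s=5$, the constant case is again excluded, so some direction $j$ carries at least two nontrivial level trades; their volumes are at least $2$ and sum to $5$, so there are exactly two of them, whence by Lemma~\ref{level} we get $5=2a$ for an integer $a$ --- absurd. Therefore $\{1,2,3,5\}\cap S(2,3)=\emptyset$, and combined with the first paragraph this gives $S(2,3)=\mathbb{N}_0\backslash\{1,2,3,5\}$.

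The volumes $1,2,3$ are handled almost immediately by the constant obstruction, so the heart of the argument is $s=5$ --- the only integer strictly between the critical points $s_1=4$ and $s_2=6$ --- and the point I expect to be most delicate there is to pin down that the two level trades into which $T$ must split have volumes $2$ and $3$, so that Lemma~\ref{level} genuinely applies and yields the parity contradiction. A secondary thing to keep in mind throughout is the multiset convention: in the cases $s=2,3$ one must count pair-incidences with their multiplicities in order to conclude $T_2=T_1$, and in the level-trade decomposition one must likewise count elements with multiplicity.
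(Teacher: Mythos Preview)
Your proof is correct and uses essentially the same ingredients as the paper: Proposition~\ref{Fu} for existence, and the level-trade decomposition together with Proposition~\ref{S(1,2)} and Lemma~\ref{level} for nonexistence. The only structural difference is that the paper dismisses $s=1,2$ as obvious and treats $s=3$ and $s=5$ together (Lemma~\ref{level} rules out exactly two level trades since both are odd, forcing at least three and hence $s\ge 6$), whereas you group $s=1,2,3$ under your explicit ``constant obstruction'' and reserve Lemma~\ref{level} for $s=5$; your version has the mild advantage of not silently assuming the ``at least two nontrivial level trades'' fact that the paper uses without proof.
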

\begin{proof}{ Obviously, there exist no $\Ts{2}{v}{3}$ of
volumes $1$ and $2$. Assume that $T$ is a $\T{2}{v}{3}$ of volume
$3$ (or $5$). Then by Lemma~\ref{level}, each of these two numbers must decompose into at
least three positive numbers from the set $S(1,2)=\{0,2,3,4,\ldots
\}$ which is impossible.  \\ Each  Latin bitrade is a $\T{2}{v}{3}$,
so $\mathbb N_0\backslash \{1,2,3,5\}\subseteq S(2,3).$ }
\end{proof}
%---------------------------------------------------------------------------------------%
\begin{theorem}\label{nonexistence<s0}
There  exists no $\ITT{t}{v}{t+1}$ of volume $s$, for any \linebreak
$s_0=0<s<2^t=s_1$.
\end{theorem}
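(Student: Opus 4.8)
The plan is to argue by induction on $t$, using the level-trade decomposition together with Lemma~\ref{level} and the known base cases. For the base case $t=1$, Proposition~\ref{S(1,2)} gives $S(1,2)=\mathbb N_0\setminus\{1\}$, so no $\T{1}{v}{2}$ has volume $s$ with $0<s<2=2^1$; this settles $t=1$. Now suppose the statement holds for $t-1$, i.e.\ no $\T{(t-1)}{v}{t}$ has volume strictly between $0$ and $2^{t-1}$. Let $T=(T_1,T_2)$ be a nontrivial $\T{t}{v}{t+1}$ and fix a direction $j\in\{1,\ldots,t+1\}$. Decomposing $T$ along $j$ produces level trades, one for each value $x\in V$ appearing in coordinate $j$; each is a $\T{(t-1)}{v'}{t}$, and their volumes sum to $\vol(T)=s$.

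The key step is to bound below the number and the sizes of the nontrivial level trades. First, since $T$ is nontrivial there is at least one nontrivial level trade in direction $j$; but a single nontrivial level trade is impossible, because deleting coordinate $j$ entirely from $T_1$ and $T_2$ must still yield equal multisets (the row/column counts in the remaining coordinates must balance), forcing the ``leftover'' to cancel — so there are at least two nontrivial level trades in every direction. By the induction hypothesis each nontrivial level trade has volume $0$ or $\ge 2^{t-1}$, hence each nontrivial one has volume $\ge 2^{t-1}$. Therefore $s\ge 2\cdot 2^{t-1}=2^t=s_1$. This contradicts $s<2^t$, and completes the induction.

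The main obstacle is justifying cleanly that a single nontrivial level trade cannot occur — equivalently, that the number of nontrivial level trades in any direction is never exactly one. The intuition is exactly the picture of Figure~1 read backwards: if only one value $x$ in coordinate $j$ supported a nontrivial exchange, then after deleting coordinate $j$ the collection $T_1$ and $T_2$ would have to coincide as multisets in the remaining $t$ coordinates (since every $t$-subset not involving $j$ already sees equality within each level, and the levels other than $x$ are trivial), which would force the level trade at $x$ itself to be trivial. Making this rigorous is a short counting argument on the frequency vectors restricted to the hyperplane $x_j=x$, but it must be stated carefully because here repeated $k$-tuples and repeated symbols within coordinates are permitted. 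Once this ``at least two nontrivial levels'' fact is in hand, the rest is the immediate arithmetic $s\ge 2^{t-1}+2^{t-1}$.
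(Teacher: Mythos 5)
Your proof is correct and follows essentially the same route as the paper: induction on $t$, decomposing $T$ along a direction into level trades, observing that at least two of them are nontrivial and that each nontrivial one has volume at least $2^{t-1}$ by the induction hypothesis, whence $s\ge 2\cdot 2^{t-1}=2^t$, a contradiction. The only difference is that you supply a justification for the ``at least two nontrivial level trades in each direction'' claim (via the trade condition for the $t$-subset omitting coordinate $j$, which forces the signed projections of the levels to sum to zero), a point the paper simply asserts.
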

\begin{proof}{  We proceed by induction on $t$. The
statement obviously holds for the case $t=1$. Assume, by induction
hypotheses, the statement holds for all values less than $t$, i.e.
if $a$ is the volume of a $t'$-Latin trade ($t'<t$), then $a \geq
2^{t'}$. We show that theorem holds for $t$ also. Suppose the
statement is not true for $t$, and let $T$ be a  $\T{t}{v}{t+1}$ of
volume $s$ with $0<s<2^t$. $T$ has at least two non-trivial level
trades in each direction. Suppose in some direction $j$, \ $T$ has $l$
level trades of volumes $ a_1, a_2, \ldots, a_l$, where $l \geq 2$
and $s=a_1+\cdots +a_l$. By induction hypotheses $a_i \geq 2^{t-1}$,
for each $i$. Therefore $s \geq l\cdot 2^{t-1} \geq 2\cdot  2^{t-1}
=2^{t}$, which is a contradiction. }\end{proof}

%---------------------------------------------------------------------------------------%
\begin{theorem}\label{nonexistencesi}
For any  \ $s\in (2^{t+1}-2^{(t+1)-i} ,2^{t+1}-2^{(t+1)-(i+1)}),
\linebreak 1\leq i\leq t$, there does not exist any \
$\ITT{t}{v}{t+1}$ of volume~$s.$
\end{theorem}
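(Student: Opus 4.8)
The plan is to proceed by induction on $t$, exactly in the spirit of Theorem~\ref{nonexistence<s0}, but keeping track of the finer gaps $(s_i,s_{i+1})$ rather than the single gap below $s_1$. For the base case $t=1$ the interval $1\leq i\leq t$ forces $i=1$, and $(s_1,s_2)=(2,3)$ contains no integer, so there is nothing to prove; similarly one should check $t=2$ directly against Proposition~\ref{S(2,3)}, where the forbidden intervals $(s_1,s_2)=(2,3)$ and $(s_2,s_3)=(4,6)$ predict that $5\notin S(2,3)$, matching Fu's theorem. So the inductive step is the real content.

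For the inductive step, suppose $T=(T_1,T_2)$ is a $\T{t}{v}{t+1}$ of volume $s$ with $s_i<s<s_{i+1}$ for some $1\leq i\leq t$. Fix a direction $j$; as in Theorem~\ref{nonexistence<s0}, $T$ decomposes into $l\geq 2$ non-trivial level trades of volumes $a_1,\dots,a_l$ with $s=a_1+\cdots+a_l$, each $a_m$ being the volume of a $\T{(t-1)}{v'}{t}$. By the induction hypothesis applied to $t-1$, each $a_m$ avoids the union of the open intervals $(s'_{i'},s'_{i'+1})$ for $1\leq i'\leq t-1$, where $s'_{i'}=2^{t}-2^{t-i'}$; combined with Theorem~\ref{nonexistence<s0} at level $t-1$ (which rules out $0<a_m<2^{t-1}=s'_1$), this means each $a_m$ lies in $\{0\}\cup[s'_1,s'_2]\cup[s'_2,\dots]$, i.e. each nonzero $a_m$ is at least $s'_1=2^{t-1}$ and in fact belongs to the ``allowed set'' $A_{t-1}:=\bigcup_{i'=1}^{t}[s'_{i'-1}\text{-part}]$ — more usefully, each $a_m\in\{0\}\cup\{x: x\geq s'_{i'} \text{ and } x\leq s'_{i'+1}\text{ fails}\}$, so $a_m$ lies in one of the closed ``plateaus'' $[s'_{i'},s'_{i'+1}]$ is wrong; rather $a_m\notin(s'_{i'},s'_{i'+1})$ for each $i'$, so $a_m$ avoids every such open gap.

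The heart of the argument is then a purely arithmetic lemma: if $l\geq 2$ and each $a_m$ avoids all the open intervals $(s'_{i'},s'_{i'+1})$ (with $s'_{i'}=2^{t}-2^{t-i'}=2s_{i'}$ in the level-$t$ scaling, since $s'_{i'}$ at level $t-1$ equals $2^{t}-2^{t-i'}$), then $\sum_m a_m$ cannot land strictly between $s_i$ and $s_{i+1}$. The clean way to see this is to observe that each nonzero $a_m$, being a level-$(t-1)$-trade volume, has the form $a_m = 2b_m$ where $b_m$ is itself a level-$(t-1)$ volume? — no; instead use that the critical points at level $t$ satisfy $s_{i+1}=2s_i+$ (a correction), and more directly: write $a_m$ in the interval $[s'_{i_m'},\,s'_{i_m'+1}]$ it is ``attached below'' to, meaning $i_m'$ is the largest index with $s'_{i_m'}\leq a_m$. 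Then $s'_{i_m'}\leq a_m$, and since $a_m$ avoids the next open gap, either $a_m=s'_{i_m'}$ or $a_m\geq s'_{i_m'+1}$; iterating, each $a_m$ is $\geq s'_{i_m'}$ and $<s'_{i_m'+1}$ would force $a_m=s'_{i_m'}$ only in degenerate cases. The cleanest route, which I expect to be the one the authors take, is: since $s<s_{i+1}=2^{t+1}-2^{t-i}$ and $l\geq 2$, we get $\min_m a_m \leq s/2 < 2^{t}-2^{t-i-1} = s'_{i+1}$ (at level $t-1$), so every $a_m<s'_{i+1}$, hence (avoiding gaps and being nonzero) every $a_m\in\{s'_1,s'_2,\dots,s'_i\}\cup(\text{values }<s'_1)=\{2^{t-1},\dots\}$ — precisely $a_m$ is a sum that itself, by induction down to the base, is forced to be a $\mathbb{Z}$-combination of powers of two of a restricted shape; summing $l\geq 2$ such values and using $2a_m$-type doubling (Lemma~\ref{2s} in reverse is not available, but Lemma~\ref{level} is) shows $s\geq$ the next critical point $s_{i+1}$ whenever $s>s_i$, a contradiction.

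I expect the main obstacle to be making this arithmetic step fully rigorous: one must show that any sum of $l\geq2$ values, each avoiding all the lower-level gaps, that exceeds $s_i=2s'_i/2$... more precisely exceeds $2^{t+1}-2^{t+1-i}$, must reach $2^{t+1}-2^{t-i}$. The key identity to exploit is $s_{i+1}-s_i = 2^{t-i}$ and $s_i = 2 s'_{i}$ where $s'_i=2^{t}-2^{t-i}$ is the level-$(t-1)$ critical point, together with the fact that the largest level-$(t-1)$ value below $s'_{i+1}$ is $s'_i$, so each $a_m\leq s'_i$ forces $s\leq l\cdot s'_i$; combined with $l\geq 2$ one needs a lower bound on how the $a_m$ must ``fill'' — here one uses that if some $a_m$ strictly exceeds $s'_i$ it must be $\geq s'_{i+1}>s/2$, impossible, and if all $a_m\leq s'_i$ then a counting/parity argument on the binary expansions (each $a_m\equiv 0 \pmod{2^{t-i-1}}$ is the crucial divisibility, inherited inductively) shows $s$ is a multiple of $2^{t-i}$ or lies at a critical point, excluding the open interval. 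Writing out this divisibility bookkeeping cleanly, and handling the case $i=t$ (where $s'_{i+1}$ is undefined at level $t-1$ and one falls back on Theorem~\ref{nonexistence<s0} plus the top critical point), is where the care is needed.
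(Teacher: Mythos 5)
Your overall strategy (induction on $t$, decomposing into level trades in a fixed direction, then arguing arithmetically about the volumes $a_1,\dots,a_l$) is the right skeleton and matches the paper's, and your base cases are fine. But the central step --- the ``purely arithmetic lemma'' that a sum of $l\geq 2$ nonzero level-$(t-1)$ volumes, each avoiding the lower-level gaps, cannot land in $(s_i,s_{i+1})$ --- is never actually proved in your write-up, and as you state it (with the $a_m$ treated as independent allowed values) it is \emph{false}. Concretely, take $t=3$: the allowed level-$2$ volumes are $S(2,3)=\mathbb{N}_0\setminus\{1,2,3,5\}$, and $4+6=10$ lies in the forbidden interval $(s_1,s_2)=(8,12)$. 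So knowing only that $s$ is a sum of two allowed lower-level volumes cannot exclude $s=10$. The paper closes exactly this hole with a three-way case split on the number $l$ of non-trivial level trades. When $l=2$ it invokes Lemma~\ref{level}, whose content is structural, not arithmetic: the two level trades are $(T_1',T_2')$ and $(T_2',T_1')$ and hence have \emph{equal} volume, so $s=2s'$ and the contradiction is pushed down one level (this kills the $4+6$ example, since $10\neq 2s'$ for any allowed $s'$). When $l=3$ in some direction (but $l>2$ in all), a short computation using the critical points $s'_1=2^{t-1}$ and $s'_2=3\cdot 2^{t-2}$ forces two of the three volumes to equal $2^{t-1}$ and the third to be either a level-$(t-1)$ critical point (making $s$ a level-$t$ critical point) or $>2^t-1$ (making $s>s_{t+1}$). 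When $l\geq 4$ in every direction, $s\geq 4\cdot 2^{t-1}=2^{t+1}>s_{t+1}$. You mention Lemma~\ref{level} only parenthetically and never use the equal-volume fact, which is indispensable.

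A second, smaller problem: your proposed divisibility bookkeeping ($a_m\equiv 0\pmod{2^{t-i-1}}$, ``inherited inductively'') is unjustified and false in general, since every integer $\geq 2^t-1$ is an allowed level-$(t-1)$ volume and these are not confined to any congruence class. The induction hypothesis only tells you that each $a_m$ avoids finitely many open intervals below $2^t-1$; above the top critical point there is no constraint at all, which is why the paper's argument must separately use the global upper bound $s<s_{t+1}=2^{t+1}-1$ to cap how large the $a_m$ can be. As written, your proposal is a plan with its key step missing rather than a proof.
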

\begin{proof}{ We proceed by induction on $t$.  For
case $t=1$, there is nothing to be proved. For $t=2$, statement
follows from Proposition~\ref{S(2,3)}. Assume, by induction
hypotheses, that statement holds for all values less than~$t$
($t>2$), i.e. if $t'<t$ then there exists no $\T{t'}{v'}{t'+1}$ of
volume $s'$, where
$s'_i=2^{t'+1}-2^{(t'+1)-i}<s'<2^{t'+1}-2^{(t'+1)-(i+1)}=s'_{i+1},
1\leq i\leq t'$.
We show that it holds for $t$ also. Suppose in contrary for some
$i$ and some $s$, where $ s_i<s<s_{i+1}$, there exists a
$\T{t}{v}{t+1}$ of volume $s$. We show a contradiction.
There are
three cases to consider:

{\bf Case $1.$} In some direction $T$ has only two non-trivial
level trades. So by Lemma~\ref{level} we have  $s=2s'$, where
$s'$ is the volume of some $\T{(t-1)}{v'}{t}$. Therefore  we have
$\frac {s_i}{2}<s'=\frac {s}{2} < \frac {s_{i+1}}{2}$, or %
$$2^{(t-1)+1}-2^{[(t-1)+1]-i}
<s'<2^{(t-1)+1}-2^{[(t-1)+1]-(i+1)}$$ which is a contradiction.

{\bf Case $2.$} In each direction $T$ has more than two
non-trivial level trades, and in some direction it has only three
non-trivial level trades. So $s=a+b+c$, where for each value  of
$a$,  $b$ and $c$ there exist $\Ts{(t-1)}{v'}{t}$ of these
volumes. Note that by Theorem~\ref{nonexistence<s0} we have $a,
b, c\geq 2^{t-1}.$ We claim that at
least two of values $a$, $b$ and $c$ are equal to $2^{t-1}$. \\
Proof of claim:  We know that the critical points in the case
$t-1$,  in increasing order, are
$$s'_o=0,s'_1=2^{t-1},s'_2=3\cdot 2^{t-2},s'_3=7\cdot 2^{t-3},\ldots,
s'_{t}=2^t-1.$$ If $a=2^{t-1}$ and $b$, $c\geq 3\cdot 2^{t-2}$,
then
$$s=a+b+c\geq 2^{t-1}+2\cdot 3\cdot 2^{t-2}=2^{t-1}(1+3)=2^{t+1},$$
which is impossible, because, $s<s_{t+1}=2^{t+1}-1$. So we have
either
\begin{itemize}
\item[a)]
$a=b=2^{t-1}$ and $c=2^t-2^{t-j}$, for some $j$, \ $1\leq j\leq t$ \ \
or
\item[b)]
$a=b=2^{t-1}$ and $c>2^{t}-1$.
\end{itemize}
In $(a)$ we have  $s=a+b+c=2\cdot
2^{t-1}+2^t-2^{t-j}=2^{t+1}-2^{(t+1)-(j+1)}.$ \linebreak
 This means that $s$
is a critical point of case $t$, which is a contradiction. In
$(b)$ we have \ $s=a+b+c>2\cdot 2^{t-1}+2^{t}-1=2^{t+1}-1$, which
is also impossible.

{\bf Case $3.$} In all directions $T$  has at least four
non-trivial level trades. This means that $s=\sum_{i=1}^{l}a_i$,
where $l\geq4$ and for each $a_i$ there exists a
$\T{(t-1)}{v'}{t}$ of volume $a_i$. But then we have $s\geq
4\cdot 2^{t-1}=2^{t+1}$, which is impossible. }\end{proof}

%---------------------------------------------------------------------------------------%
\section{Spectrum of  $\Ts{3}{v}{4}$}
For two integers $a$ and $b$ with $a<b$ we denote
$[a,b]=\{a,a+1,\ldots ,b\}$.  We prove the following theorem.
%---------------------------------------------------------------------------------------%
%
\begin{theorem}\label{S(3,4)}
$S(3,4)=\mathbb N_0\backslash ([1,7]\cup [9,11]\cup \{13\}).$
\end{theorem}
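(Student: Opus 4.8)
The plan is to determine $S(3,4)$ by combining the nonexistence results already proved with explicit constructions covering every remaining volume. For $t=3$ the critical points are $s_0=0$, $s_1=2^3=8$, $s_2=3\cdot 2^2=12$, $s_3=7\cdot 2=14$, $s_4=2^4-1=15$. Theorem~\ref{existencesi} gives $0,8,12,14,15\in S(3,4)$, and Theorem~\ref{nonexistence<s0} together with Theorem~\ref{nonexistencesi} rules out all of $[1,7]$ (between $s_0$ and $s_1$), $[9,11]$ (between $s_1$ and $s_2$), and $13$ (between $s_2$ and $s_3$). So the nonexistence side of the claimed equality is already in hand, and what remains is to show that every integer $s\ge 16$, together with the already-handled values, lies in $S(3,4)$.

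For the existence side, the natural engine is Lemma~\ref{2s}: from any $\ITT{2}{v}{3}$ of volume $s$ we obtain a $\ITT{3}{v}{4}$ of volume $2s$. By Proposition~\ref{Fu}/Proposition~\ref{S(2,3)} we have $S(2,3)=\mathbb N_0\setminus\{1,2,3,5\}$, so doubling gives all even volumes $2s$ with $s\in\{0,4,6,7,8,9,\dots\}$, i.e. every even integer $\ge 12$ as well as $0$ and $8$. To reach the odd volumes and to fill $16, 17$, I would use the additive structure from Remark~\ref{union} / Lemma~\ref{T+R}: if $T$ is a $\ITT{3}{v}{4}$ of volume $a$ and $R$ is one of volume $b$ on disjoint foundations (or more carefully, with $T_1\cap R_2 = R_1\cap T_2 = \emptyset$), then $T+R$ has volume $a+b$. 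Since $8\in S(3,4)$, adding a disjoint copy of the volume-$8$ trade to each even trade of volume $\ge 12$ produces the odd... no: $8$ is even. Instead, I need at least one odd member of $S(3,4)$ to seed the odd residue class; $15\in S(3,4)$ by Theorem~\ref{existencesi}, so $15 + (\text{even }\ge 12) $ gives all odd integers $\ge 27$, and a small number of ad hoc odd constructions (e.g. volumes $17,19,21,23,25$) must be produced by hand — this is where the polynomial representation of Section~2 is the right tool, building products like $(x_1-x_2)(x_3-x_4)(x_5-x_6)(x_7-x_8)$-type trades and modifying one factor to shift the volume.

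The step I expect to be the main obstacle is pinning down the small odd volumes $17, 19, 21, 23, 25$ and the value $16$: these cannot be gotten by doubling (would need $\ITT{2}{v}{3}$ of volume $8,\tfrac{17}2,\dots$, and $8$ works for $16$ but the odd ones are genuinely new), and they are too small to be reached by adding $15$ to an even trade of volume $\ge 12$ while keeping the foundations disjoint enough that no cancellation reduces the volume. The resolution is to exhibit each such trade explicitly — most economically via the homogeneous-polynomial encoding, where one can write down a $3$-homogeneous polynomial in a handful of variables, check it annihilates the relevant $3$-inclusion matrix (equivalently, that every monomial of degree $\le 3$ in any three coordinate positions has matching coefficients), and read off the volume as the sum of absolute values of coefficients of the monomials appearing. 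A clean way to organize this: start from the volume-$15$ example (Example~\ref{3-(3,4)l.t}) and the volume-$8$ and volume-$12$ trades, then apply the $T+R$ and $T-R$ operations of Remark~\ref{union} with carefully chosen small overlaps to land on exactly $16,17,\dots,26$; once all of $[16,26]$ is shown to be in $S(3,4)$, every larger integer follows by adding a foundation-disjoint copy of an $8$-, $12$-, or $14$-volume trade. Assembling $\{0,8,12,14,15\}\cup[16,\infty)$ and intersecting with the nonexistence results then yields exactly $\mathbb N_0\setminus([1,7]\cup[9,11]\cup\{13\})$.
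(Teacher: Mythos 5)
Your overall architecture matches the paper's proof: even volumes come from doubling $S(2,3)$ via Lemma~\ref{2s}, the nonexistence side is exactly Theorems~\ref{nonexistence<s0} and~\ref{nonexistencesi}, and large odd volumes are obtained by adding foundation-disjoint trades of even volume to a stock of small odd examples (the paper writes $s(k)=s(k-4)+8$ for $k\ge 13$). The paper seeds that induction with explicit trades of volumes $15$ (Example~\ref{3-(3,4)l.t}), $17$, $19$, $21$ (given as arrays in the Appendix), and then $23=8+15$, $25=8+17$ via Lemma~\ref{T+R}.

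The genuine gap sits exactly where you flag ``the main obstacle'': you never actually produce the trades of volumes $17$, $19$ and $21$. These are the irreducible content of the existence half. They cannot come from doubling, nor from Theorem~\ref{existencesi} (whose only odd critical value for $t=3$ is $15$), nor from foundation-disjoint sums of trades already known to exist: starting from $\{0,8,12,14,15\}\cup\{16,18,20,\dots\}$, the only odd volumes reachable by disjoint sums are $15$, $23$, and the odd integers $\ge 27$, which leaves $17$, $19$, $21$ (and, until $17$ is in hand, $25$) unaccounted for. Your fallback --- applying the $T\pm R$ operations of Remark~\ref{union} to the volume-$8$, $12$ and $15$ trades ``with carefully chosen small overlaps'' --- is a search strategy, not an argument: to land on volume $17$ from, say, volumes $15$ and $8$ you would need $|T_1\cap R_2|+|T_2\cap R_1|=6$ while the result remains a genuine trade, and nothing you have written guarantees such a configuration exists. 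The paper closes this by exhibiting the three trades explicitly in the Appendix; without those (or some verified substitute) your proof that $\{17,19,21\}\subseteq S(3,4)$, and hence the entire odd case below $27$, is incomplete.
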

\begin{proof}{ By Lemma~\ref{2s} and Proposition~\ref{S(2,3)}, for each even
number \linebreak $s \in \mathbb N_0\backslash ([1,7]\cup [9,11]\cup
\{13\})$ we can construct a $\T{3}{v}{4}$ of volume $s$. A
$\T{3}{v}{4}$ of volume 15 is given in Example~\ref{3-(3,4)l.t}
and  $\Ts{3}{v}{4}$ of volumes 17, 19, and 21 are given in the
Appendix. $\Ts{3}{v}{4}$ of volumes 23 and 25 may be constructed
by combination of $\Ts{3}{v}{4}$ of volumes (8 and 15) and (8 and
17), respectively (Lemma~\ref{T+R}). So, up to this point we know
that for each $s(k)=2k+1$, where $ 7 \leq  k  \leq  12$, there
exists a $\T{3}{v}{4}$ of volume $s(k)$. For $k \geq 13$ we write
$s(k)=s(k-4)+8$, and then, by induction and by Lemma~\ref{T+R},
for each $k \geq 13$ we can obtain a $\T{3}{v}{4}$ of volume
$s(k)$. Now the proof  is complete by
Theorems~\ref{nonexistence<s0} and~\ref{nonexistencesi}.
}\end{proof}
\section{Future Research}
The study of $\Ts{t}{v}{k}$ when $k=t+1$, is of special interest.
For example similar to Latin  bitrades, some $\Ts{3}{v}{4}$  may
also be denoted by $T=(M,N)$, where $M$ and $N$ are two partial
Latin cubes obtained from some Latin cubes $C_1$ and $C_2$ by
deleting their common entries. This geometrical view will shed a
light to studying questions and conjectures about $\Ts{3}{v}{4}$.

\begin{ques}
\label{spectrumquestion2}%
What are the implications in   geometrical interpretation of \linebreak
$\ITTs{3}{v}{4}$?
\end{ques}

A Latin bitrade is called {\sf $k$-homogeneous} if each row and
each column contains exactly $k$ elements, and each element
appears exactly $k$ times (see for example~\cite{MR2220235}, for
more information). We may define a $k$-homogeneous
$\T{t}{v}{t+1}$ and seek for their existence.
\begin{ques}
\label{spectrumquestion3}%
What are the possible spectrums of $k$-homogeneous \linebreak
$\ITTs{t}{v}{t+1}$?
\end{ques}
%
%\newpage % QQQQQQQQQQQQQQQQQQQQQQQQQQQQQQQQQQQQQQQQQQQQQQQQQQQQQQQQ
%%---------------------------------------------------------------------------------------%
\section{Appendix}
%
%%%%%%%%%%%%%%%%%%%%%%%%%%%%%%%%%%%%%%%%%%%%%%%%%%%%%%%%%%%%%%%%%%%%%%%%%%%%%%%%%%%%%%%%%%
%\vspace*{0.5cm}
\begin{center}
A \ $\T{3}{4}{4}$ of volume 17:    \\[.1cm]
\begin{tabular}{|c|c|} \hline
 $T_1$ &
\begin{tabular}
{c@{\hspace{1.9mm}}c@{\hspace{1.9mm}}c@{\hspace{1.9mm}}c@{\hspace{1.9mm}}c@
{\hspace{1.9mm}}c@{\hspace{1.9mm}}c@{\hspace{1.9mm}}c@{\hspace{1.9mm}}c@{\hspace{1.9mm}}c@
{\hspace{1.9mm}}c@{\hspace{1.9mm}}c@{\hspace{1.9mm}}c@{\hspace{1.9mm}}c@{\hspace{1.9mm}}c@
{\hspace{1.9mm}}c@{\hspace{1.9mm}}c@ {\hspace{1.9mm}}c}
 2&2&2&1&1&1&3&3&1&1&3&3&2&2&2&1&1   \\
 3&2&1&3&2&1&3&2&3&2&3&2&3&2&1&2&1   \\
 3&3&3&3&3&3&2&2&2&2&1&1&1&1&1&1&1   \\
 2&3&1&3&1&2&3&2&2&3&2&3&3&1&2&2&1   \\
\end{tabular}
\\ \hline
\end{tabular} \\[.4cm]
\begin{tabular}{|c|c|} \hline
 $T_2$ &
\begin{tabular}
{c@{\hspace{1.9mm}}c@{\hspace{1.9mm}}c@{\hspace{1.9mm}}c@{\hspace{1.9mm}}c@
{\hspace{1.9mm}}c@{\hspace{1.9mm}}c@{\hspace{1.9mm}}c@{\hspace{1.9mm}}c@{\hspace{1.9mm}}c@
{\hspace{1.9mm}}c@{\hspace{1.9mm}}c@{\hspace{1.9mm}}c@{\hspace{1.9mm}}c@{\hspace{1.9mm}}c@
{\hspace{1.9mm}}c@{\hspace{1.9mm}}c@ {\hspace{1.9mm}}c}
 2&2&2&1&1&1&3&3&1&1&3&3&2&2&2&1&1    \\
 3&2&1&3&2&1&3&2&3&2&3&2&3&2&1&2&1    \\
 3&3&3&3&3&3&2&2&2&2&1&1&1&1&1&1&1    \\
 3&1&2&2&3&1&2&3&3&2&3&2&2&3&1&1&2    \\
\end{tabular}
\\ \hline
\end{tabular}
\end{center}
%%%%%%%%%%%%%%%%%%%%%%%%%%%%%%%%%%%%%%%%%%%%%%%%%%%%%%%%%%%%%%%%%%%%%%%%%%%%%%%%%%%%%%%%%
\vspace*{.1cm}
\begin{center}
A \ $\T{3}{4}{4}$ of volume 19:    \\[.1cm]
\begin{tabular}{|c|c|} \hline
 $T_1$ &
\begin{tabular}
{c@{\hspace{1.9mm}}c@{\hspace{1.9mm}}c@{\hspace{1.9mm}}c@{\hspace{1.9mm}}c@
{\hspace{1.9mm}}c@{\hspace{1.9mm}}c@{\hspace{1.9mm}}c@{\hspace{1.9mm}}c@{\hspace{1.9mm}}c@
{\hspace{1.9mm}}c@{\hspace{1.9mm}}c@{\hspace{1.9mm}}c@{\hspace{1.9mm}}c@{\hspace{1.9mm}}c@
{\hspace{1.9mm}}c@{\hspace{1.9mm}}c@{\hspace{1.9mm}}c@
{\hspace{1.9mm}}c@{\hspace{1.9mm}}c}
 3&3&2&2&2&1&1&1&2&2&1&1&3&3&2&2&2&1&1  \\
 3&2&4&3&1&4&2&1&4&2&4&2&3&2&3&2&1&2&1  \\
 3&3&3&3&3&3&3&3&2&2&2&2&1&1&1&1&1&1&1  \\
 3&2&3&2&1&1&3&2&1&3&3&1&2&3&3&1&2&2&1  \\
\end{tabular}
\\ \hline
\end{tabular} \\[.4cm]
\begin{tabular}{|c|c|} \hline
 $T_2$ &
\begin{tabular}
{c@{\hspace{1.9mm}}c@{\hspace{1.9mm}}c@{\hspace{1.9mm}}c@{\hspace{1.9mm}}c@
{\hspace{1.9mm}}c@{\hspace{1.9mm}}c@{\hspace{1.9mm}}c@{\hspace{1.9mm}}c@{\hspace{1.9mm}}c@
{\hspace{1.9mm}}c@{\hspace{1.9mm}}c@{\hspace{1.9mm}}c@{\hspace{1.9mm}}c@{\hspace{1.9mm}}c@
{\hspace{1.9mm}}c@{\hspace{1.9mm}}c@{\hspace{1.9mm}}c@
{\hspace{1.9mm}}c@{\hspace{1.9mm}}c}
 3&3&2&2&2&1&1&1&2&2&1&1&3&3&2&2&2&1&1  \\
 3&2&4&3&1&4&2&1&4&2&4&2&3&2&3&2&1&2&1  \\
 3&3&3&3&3&3&3&3&2&2&2&2&1&1&1&1&1&1&1  \\
 2&3&1&3&2&3&2&1&3&1&1&3&3&2&2&3&1&1&2  \\
\end{tabular}
\\ \hline
\end{tabular}
\end{center}
%%%%%%%%%%%%%%%%%%%%%%%%%%%%%%%%%%%%%%%%%%%%%%%%%%%%%%%%%%%%%%%%%%%%%%%%%%%%%%%%%%%%%%%%%%%
\vspace*{.1cm}
\begin{center}
A \ $\T{3}{3}{4}$ of volume 21: \\[.1cm]
\begin{tabular}{|c|c|} \hline
 $T_1$ &
\begin{tabular}
{c@{\hspace{1.9mm}}c@{\hspace{1.9mm}}c@{\hspace{1.9mm}}c@{\hspace{1.9mm}}c@
{\hspace{1.9mm}}c@{\hspace{1.9mm}}c@{\hspace{1.9mm}}c@{\hspace{1.9mm}}c@{\hspace{1.9mm}}c@
{\hspace{1.9mm}}c@{\hspace{1.9mm}}c@{\hspace{1.9mm}}c@{\hspace{1.9mm}}c@{\hspace{1.9mm}}c@
{\hspace{1.9mm}}c@{\hspace{1.9mm}}c@{\hspace{1.9mm}}c@{\hspace{1.9mm}}c@{\hspace{1.9mm}}c@
{\hspace{1.9mm}}c@{\hspace{1.9mm}}c}
    3&3&2&2&2&1&1&3&3&2&2&2&1&1&3&3&2&2&2&1&1   \\
    3&2&3&2&1&2&1&3&2&3&2&1&2&1&3&2&3&2&1&2&1   \\
    3&3&3&3&3&3&3&2&2&2&2&2&2&2&1&1&1&1&1&1&1   \\
    1&3&3&2&1&1&2&2&1&1&3&2&2&3&3&2&2&1&3&3&1   \\
\end{tabular}
\\ \hline
\end{tabular} \\[.4cm]
\begin{tabular}{|c|c|} \hline
 $T_2$ &
\begin{tabular}
{c@{\hspace{1.9mm}}c@{\hspace{1.9mm}}c@{\hspace{1.9mm}}c@{\hspace{1.9mm}}c@
{\hspace{1.9mm}}c@{\hspace{1.9mm}}c@{\hspace{1.9mm}}c@{\hspace{1.9mm}}c@{\hspace{1.9mm}}c@
{\hspace{1.9mm}}c@{\hspace{1.9mm}}c@{\hspace{1.9mm}}c@{\hspace{1.9mm}}c@{\hspace{1.9mm}}c@
{\hspace{1.9mm}}c@{\hspace{1.9mm}}c@{\hspace{1.9mm}}c@{\hspace{1.9mm}}c@{\hspace{1.9mm}}c@
{\hspace{1.9mm}}c@{\hspace{1.9mm}}c}
     3&3&2&2&2&1&1&3&3&2&2&2&1&1&3&3&2&2&2&1&1  \\
     3&2&3&2&1&2&1&3&2&3&2&1&2&1&3&2&3&2&1&2&1  \\
     3&3&3&3&3&3&3&2&2&2&2&2&2&2&1&1&1&1&1&1&1  \\
     3&1&1&3&2&2&1&1&2&2&1&3&3&2&2&3&3&2&1&1&3  \\
\end{tabular}
\\ \hline
\end{tabular}
\end{center}%---------------------------------------------------------------------------------------%
\noindent {\bf Acknowledgement.} This research was in part
supported by a grant from IPM (\#86050213).
%
%\newpage % QQQQQQQQQQQQQQQQQQQQQQQQQQQQQQQQQQQQQQQQQQQQQQQQQQQQQQQQ
%\bibliographystyle{plain}
%\bibliography{sr305}

\begin{thebibliography}{10}
%
\bibitem{MR2220235}
Richard Bean, Hoda Bidkhori, Maryam Khosravi, and E.~S. Mahmoodian.
\newblock {$k$}-homogeneous {L}atin trades.
\newblock {\em Bayreuth. Math. Schr.}, (74):7--18, 2005.

\bibitem{MR2041871}
Elizabeth~J. Billington.
\newblock Combinatorial trades: a survey of recent results.
\newblock In {\em Designs, 2002}, volume 563 of {\em Math. Appl.}, pages
  47--67. Kluwer Acad. Publ., Boston, MA, 2003.

\bibitem{CavenaghMathSlovac}
Nicholas~J. Cavenagh.
\newblock The theory and application of latin bitrades: a survey.
\newblock {\em Math. Slovac.}, to appear.

\bibitem{MR98b:05019}
Diane Donovan, Adelle Howse, and Peter Adams.
\newblock A discussion of {L}atin interchanges.
\newblock {\em J. Combin. Math. Combin. Comput.}, 23:161--182, 1997.

\bibitem{Drapal}
A.~Dr{\'a}pal.
\newblock Geometry of latin trades.
\newblock {\em Manuscript circulated at the conference Loops `03, Prague},
  2003.

\bibitem{MR733686}
A.~Dr{\'a}pal and T.~Kepka.
\newblock Exchangeable partial groupoids. {I}.
\newblock {\em Acta Univ. Carolin. Math. Phys.}, 24(2):57--72, 1983.

\bibitem{MR1125351}
Chin~Mei Fu and Hung-Lin Fu.
\newblock The intersection problem of {L}atin squares.
\newblock {\em J. Combin. Inform. System Sci.}, 15(1-4):89--95, 1990.
\newblock Graphs, designs and combinatorial geometries (Catania, 1989).

\bibitem{fu}
H-L.\ Fu.
\newblock {\em On the construction of certain type of latin squares with
  prescribed intersections}.
\newblock PhD thesis, Auburn University, 1980.

\bibitem{Hamalainen}
Carlo H\"{a}m\"{a}l\"{a}inen.
\newblock {\em Latin Bitrades And Related Structures}.
\newblock PhD thesis, The Univesity of Queensland, 2007.

\bibitem{MR1056530}
A.~S. Hedayat.
\newblock The theory of trade-off for {$t$}-designs.
\newblock In {\em Coding theory and design theory, Part II}, volume~21 of {\em
  IMA Vol. Math. Appl.}, pages 101--126. Springer, New York, 1990.

\bibitem{MR96a:05027}
A.~D. Keedwell.
\newblock Critical sets and critical partial {L}atin squares.
\newblock In {\em Combinatorics, graph theory, algorithms and applications
  (Beijing, 1993)}, pages 111--123. World Sci. Publishing, River Edge, NJ,
  1994.

\bibitem{MR1393712}
A.~D. Keedwell.
\newblock Critical sets for {L}atin squares, graphs and block designs: a
  survey.
\newblock {\em Congr. Numer.}, 113:231--245, 1996.
\newblock Festschrift for C. St.\ J. A. Nash-Williams.

\bibitem{KhanbanMahdianMahmoodian}
A.~A. Khanban, M.~Mahdian, and E.~S. Mahmoodian.
\newblock A linear algebraic approach to orthogonal arrays and latin squares.
\newblock {\em Ars Combinatoria}, to appear.

\bibitem{MR2338087}
James Lefevre, Diane Donovan, Nicholas Cavenagh, and Ale{\v{s}}
Dr{\'a}pal.
\newblock Minimal and minimum size {L}atin bitrades of each genus.
\newblock {\em Comment. Math. Univ. Carolin.}, 48(2):189--203, 2007.

\bibitem{MR1196125}
E.~S. Mahmoodian and Nasrin Soltankhah.
\newblock On the existence of {$(v,k,t)$} trades.
\newblock {\em Australas. J. Combin.}, 6:279--291, 1992.

\end{thebibliography}
\def\cprime{$'$}

\end{document}